\theoremstyle{plain}
\newtheorem{theorem}{Theorem}[section]
\newtheorem{lemma}[theorem]{Lemma}
\newtheorem{corollary}[theorem]{Corollary}
\newtheorem{proposition}[theorem]{Proposition}
\newtheorem{conjecture}[theorem]{Conjecture}
\newtheorem*{problemintro*}{Problem}
\theoremstyle{definition}
\newtheorem{observation}[theorem]{Observation}
\newtheorem{problem}[theorem]{Problem}
\definecolor{darkblue}{rgb}{0,0,0.7} 
\DeclareMathOperator{\susp}{\Sigma} 
\DeclareMathOperator{\lk}{lk} 
\DeclareMathOperator{\st}{st} 
\DeclareMathOperator{\antist}{ast}
\DeclareMathOperator{\bd}{\partial} 
\newcommand{\marrow}{\marginpar{$\longleftarrow$}}
\newcommand{\maria}[1]{\textsc{\textcolor{red}{Maria says:}} \marrow \textsf{#1}}
\title{Induced equators in flag spheres}
\author{Maria Chudnovsky  and Eran Nevo}
\thanks{M. Chudnovsky was partially supported by NSF grants DMS-1763817 and by
  ERC advanced grant 320924.
  This material is based upon work supported in part by the U. S. Army
  Research Office under grant number  W911NF-16-1-0404.
E. Nevo was partially supported by the Israel Science Foundation grant ISF-1695/15, by ISF-BSF joint grant 2016288, and by ISF-NRF Singapore joint research program grant 2528/16.}
\begin{document}
\maketitle
\begin{abstract}
We propose a combinatorial approach to the following strengthening of Gal's conjecture:  $\gamma(\Delta)\ge \gamma(E)$ coefficientwise, where $\Delta$ is a flag homology sphere and $E\subseteq \Delta$ an induced homology sphere of codimension $1$. We provide partial evidence in favor of this approach, and prove a nontrivial nonlinear inequality that follows from the above conjecture, for boundary complexes of flag $d$-polytopes: $h_1(\Delta) h_i(\Delta) \ge (d-i+1)h_{i-1}(\Delta) + (i+1) h_{i+1}(\Delta)$ for all $0\le i\le d$.
\end{abstract}

\section{Introduction}
The $\gamma$-vector, reviewed in the next section, encodes the face numbers of simplicial complexes which are homology spheres.
These complexes are \emph{flag} if equal the clique complex of their $1$-skeleton, for example barycentric subdivisions of the boundary complex of polytopes. Gal~\cite{Gal} conjectured the following tight analog of the  GLBT inequalities~\cite{McMullen-g-conj, Stanley:gThm, Adiprasito-g} in the flag case:

\begin{conjecture}[Gal~\cite{Gal}]\label{conj:Gal}
If $\Delta$ is the boundary complex of a flag polytope, or more generally a flag homology sphere, then
$\gamma(\Delta)\geq 0$, coefficientwise.
\end{conjecture}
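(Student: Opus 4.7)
The plan is to prove Conjecture~\ref{conj:Gal} by induction on $d=\dim\Delta$, leveraging the strengthening $\gamma(\Delta)\ge\gamma(E)$ proposed in the abstract. Low-dimensional cases are known, so assume the result for flag homology spheres of dimension less than $d$. The inductive step splits into two sub-tasks:
\begin{enumerate}[label=(\roman*)]
\item exhibit in every $d$-dimensional flag homology sphere $\Delta$ a vertex-induced subcomplex $E\subseteq\Delta$ that is itself a flag homology sphere of dimension $d-1$ (an \emph{induced equator});
\item establish the coefficientwise inequality $\gamma(\Delta)\ge\gamma(E)$.
\end{enumerate}
Combined with the inductive hypothesis $\gamma(E)\ge 0$, these yield $\gamma(\Delta)\ge 0$.

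For sub-task (i), one natural candidate is the link $\lk_\Delta(v)$ of a vertex: in a flag sphere this equals the subcomplex induced on the neighbors of $v$, has the correct dimension $d-1$, and separates $\Delta$ into the two flag balls $\st_\Delta(v)$ and $\antist_\Delta(v)$. More flexible candidates arise from vertex bipartitions $V(\Delta)=A\sqcup B$ chosen so that a well-positioned induced subcomplex realizes a codimension-$1$ homology sphere. The task here is less one of existence than of choosing an $E$ with enough structure for sub-task (ii) to go through.

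For sub-task (ii), the strategy is a decomposition argument: write $\Delta=B_+\cup B_-$ as the union of two flag balls glued along $E$, and construct a combinatorial map (via a shelling or partition of $\Delta$ compatible with $E$) sending the faces that compute $\gamma(E)$ into those that compute $\gamma(\Delta)$, with a nonnegative surplus contributed by faces meeting the interiors of $B_\pm$. The nonlinear $h$-vector inequality
\[
h_1(\Delta)h_i(\Delta)\ge (d-i+1)h_{i-1}(\Delta)+(i+1)h_{i+1}(\Delta)
\]
announced in the abstract is the ``linear shadow'' of this coefficientwise $\gamma$-comparison in the polytopal case, and is already a nontrivial consequence unreachable from the classical $g$-theorem alone.

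The main obstacle is (ii). Controlling $\gamma$ under passage to an induced codimension-$1$ sphere is a nonlinear constraint that lies outside the scope of the standard Lefschetz/rigidity/algebraic-shifting toolbox, and any argument must genuinely exploit flagness. A realistic intermediate goal, which the paper pursues, is to verify (ii) in special families (e.g.\ low dimensions, barycentric subdivisions, or special equators such as vertex links) and to extract unconditional $h$- and $\gamma$-information along the way, so that even without a full resolution of Conjecture~\ref{conj:Gal} the approach yields new tight inequalities for flag spheres.
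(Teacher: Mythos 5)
There is a genuine gap, and it is unavoidable: the statement you are addressing is Gal's conjecture, which the paper does not prove --- it is stated as an open conjecture, and the paper's contribution is precisely to \emph{propose} the stronger Equator/Link program and give partial evidence for it. Your sub-task (ii), the coefficientwise inequality $\gamma(\Delta)\ge\gamma(E)$, is exactly Conjecture~\ref{conj:E} (equivalently, via Proposition~\ref{prop:link->E}, Conjecture~\ref{conj:link}), which is open; assuming it in the inductive step makes your argument circular as a proof of Conjecture~\ref{conj:Gal}. Sub-task (i) is not an issue --- the vertex link is always an equator, so existence is free --- but that only underlines that all the content sits in (ii), which you acknowledge as ``the main obstacle'' without supplying an argument. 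A proof proposal that defers its only nontrivial step to an open conjecture is a research plan, not a proof.

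Two further points of calibration against the paper. First, the paper's unconditional results are strictly weaker than what your step (ii) would require even in special cases: Proposition~\ref{prop:EforAisbett} verifies the \emph{Link} conjecture only for the family $\mathcal{S}$ of spheres obtained from crosspolytopes by edge subdivisions, Proposition~\ref{prop:equator-edge_contract} reduces the Equator conjecture to minimal spheres, and Theorem~\ref{thm:dim2} handles the structural alternative only in dimension $\le 2$. Second, the inequality $h_1 h_i \ge (d-i+1)h_{i-1}+(i+1)h_{i+1}$ is not derived as a ``linear shadow'' of a proven $\gamma$-comparison along an equator; it follows from the averaged $h$-polynomial inequality \eqref{eq:h-ineq}, which the paper proves unconditionally for flag \emph{polytopes} by a line-shelling argument over nonedges combined with the half-integral perfect matching result (Theorems~\ref{thm:h-ineq} and~\ref{thm:matching}), a mechanism quite different from the decomposition-along-$E$ map you sketch. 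If you want to contribute to this program, the concrete open targets are: prove Conjecture~\ref{conj:link} beyond the family $\mathcal{S}$, or settle the structural alternatives of Problem~\ref{prob:tentative} in dimension $\ge 3$, either of which would be genuine progress; merely chaining the conjectures together reproduces the paper's Section~1 logic without adding a proof.
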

This conjecture includes the Charney--Davis' conjecture~\cite{Charney-Davis} as a special case.

As a vertex link in a flag homology sphere is again a flag homology sphere, the following conjecture immediately implies Gal's conjecture.

\begin{conjecture}[Link Conjecture]\label{conj:link}
If $v$ is a vertex in a flag homology sphere $\Delta$, then its link satisfies
$\gamma(\lk_v(\Delta))\leq \gamma(\Delta)$, coefficientwise.
\end{conjecture}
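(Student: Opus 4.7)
My plan is to attack the Link Conjecture through the stronger ``induced equator'' inequality from the abstract, $\gamma(\Delta) \ge \gamma(E)$ for every induced codimension-$1$ homology sphere $E \subseteq \Delta$. The key observation is that in a flag complex, $\lk_v(\Delta)$ equals the induced subcomplex on the open neighborhood $N(v)$: a set $\sigma \subseteq N(v)$ lies in $\Delta$ iff $\sigma \cup \{v\}$ does (by flagness), which is precisely the condition $\sigma \in \lk_v(\Delta)$. Hence $\lk_v(\Delta)$ is an induced codimension-$1$ homology sphere in $\Delta$, and Conjecture~\ref{conj:link} is the special case $E = \lk_v(\Delta)$ of the induced equator inequality, so I would aim for that stronger statement.

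Next I would set up the geometric picture. The equator $E$ separates $\Delta$ into two (homology) balls $\Delta_+, \Delta_-$ glued along $E$; in the link case, $\Delta_+ = v \join \lk_v(\Delta)$ and $\Delta_- = \antist_v(\Delta)$. Since the suspension $\susp E$ is a flag $(d{-}1)$-sphere with $\gamma(\susp E) = \gamma(E)$, the target reads $\gamma(\Delta) \ge \gamma(\susp E)$. The idea is to interpolate between $\Delta$ and $\susp E$ by a sequence of flag-preserving combinatorial moves that peel off one vertex of $(\Delta_+ \cup \Delta_-) \setminus E$ at a time (flag edge contractions or analogous reductions), eventually collapsing each half onto a cone over $E$, and to show each such move weakly decreases $\gamma$ coefficientwise.

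The main obstacle is verifying this monotonicity. The linear coefficient is quick: $\gamma_1(\Delta) - \gamma_1(E) = f_0(\Delta) - f_0(E) - 2 \ge 0$, since a flag sphere is never a cone. Higher coefficients will likely require the bulk of the work. I would try an induction on $i$ driven by the nonlinear inequality $h_1(\Delta) h_i(\Delta) \ge (d-i+1) h_{i-1}(\Delta) + (i+1) h_{i+1}(\Delta)$ highlighted in the abstract, converting control at level $i{-}1$ into control at level $i$. As a backup, a dimensional induction also looks promising: assume Conjecture~\ref{conj:link} in dimensions $<d{-}1$, apply it to the links $\lk_u(\Delta)$ for $u \in N(v)$ (which sit inside $\lk_v(\Delta)$ as well), and sum via the identity $\sum_u h_i(\lk_u(\Delta)) = (d-i) h_i(\Delta) + (i+1) h_{i+1}(\Delta)$ to extract a coefficientwise bound on $\gamma(\Delta) - \gamma(\lk_v(\Delta))$. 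Throughout, I would sanity-check the proposed moves on cross-polytopes, joins, barycentric subdivisions, and the Nevo--Petersen edge subdivisions, where $\gamma$ can be computed by hand.
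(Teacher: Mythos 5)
The statement you are trying to prove is a \emph{conjecture} in this paper, not a theorem: the authors prove only an equivalence with the Equator Conjecture (Proposition~\ref{prop:link->E}), a reduction to the minimal spheres $\mathcal{R}$ (Proposition~\ref{prop:equator-edge_contract}), and the special case of the family $\mathcal{S}$ (Proposition~\ref{prop:EforAisbett}), so a complete argument along your lines would in particular settle Gal's Conjecture~\ref{conj:Gal}. Your opening step is fine and matches the easy direction of Proposition~\ref{prop:link->E}: by flagness $\lk_v(\Delta)$ is the induced complex on $N(v)$, hence an equator, so the Equator Conjecture implies the Link Conjecture (the paper's nontrivial contribution there is the \emph{converse}). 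But the core of your plan --- interpolating between $\Delta$ and $\susp E$ by flag-preserving moves and claiming each move weakly decreases $\gamma$ coefficientwise --- is exactly the open problem, and as stated it has two concrete failures. First, flag edge contractions exist only for edges in no induced $4$-cycle (Lemma~\ref{lem:elementary}), and the minimal spheres $\mathcal{R}$ admit none at all, so there is in general no sequence of such moves carrying $\Delta$ to $\susp E$; this is precisely why the paper only obtains a \emph{reduction} to $\mathcal{R}$ rather than a proof. Second, even when a contraction is available, Lemma~\ref{lem:gamma-basicConstructions} gives $\gamma_{\Delta}(t)=\gamma_{\Delta'}(t)+t\gamma_{\lk_e\Delta}(t)$, so monotonicity of $\gamma$ under the move already requires $\gamma(\lk_e\Delta)\ge 0$ coefficientwise, i.e.\ Gal's conjecture for the link --- your induction is therefore circular, or at best as hard as the statement you are proving (only $\gamma_1\ge 0$ and the $\gamma_2$ case are easy or known in low dimensions).

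Your backup routes do not close this gap either. The inequality $h_1h_i\ge (d-i+1)h_{i-1}+(i+1)h_{i+1}$ is established in the paper only for boundary complexes of flag (and balanced) polytopes, and it is derived as a \emph{consequence} of the averaged inequality (\ref{eq:h-ineq}) via McMullen's identity; it is a downstream corollary of the circle of ideas around the Link Conjecture, not an engine that can drive an induction on $\gamma_i$, and it says nothing about a fixed vertex link. Likewise, summing the identity $\sum_{u}h_{i-1}(\lk_u(\Delta))=ih_i(\Delta)+(d-i+1)h_{i-1}(\Delta)$ over all vertices (or over $u\in N(v)$) can only produce averaged statements of the type (\ref{eq:h-ineq}); coefficientwise control of $\gamma(\lk_v(\Delta))$ for \emph{every single} vertex $v$ cannot be extracted from such averages, which is why Theorem~\ref{thm:h-ineq} is much weaker than Conjecture~\ref{conj:link}. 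In short, the proposal reproduces the known easy reductions but leaves the essential monotonicity step unproven, and that step is the conjecture itself.
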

An \emph{equator} in a flag homology sphere is any \emph{induced} subcomplex which is a flag homology sphere of codimension $1$, see e.g.~\cite{Labbe-Nevo}. Each vertex link is an example of an equator. We prove in Proposition~\ref{prop:link->E} that the following formal generalization of Conjecture~\ref{conj:link} is in fact equivalent to it.

\begin{conjecture}[Equator Conjecture]\label{conj:E}
If $E$ is an equator in a flag homology sphere $\Delta$, then
$\gamma(E)\leq \gamma(\Delta)$, coefficientwise.
\end{conjecture}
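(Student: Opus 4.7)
The plan is to split $\Delta$ along the equator $E$ into two flag homology balls and reduce the conjecture to a relative positivity statement on each half. Since $E$ is an induced codimension-one flag homology sphere in $\Delta$, a Mayer--Vietoris/Jordan--Brouwer argument yields a decomposition
\[
\Delta = B_+\cup B_-,\qquad B_+\cap B_-=E=\partial B_+=\partial B_-,
\]
with $B_\pm$ flag homology $(d-1)$-balls---the two closed hemispheres cut out by $E$.

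The first step is to translate the face inclusion--exclusion $f(\Delta)=f(B_+)+f(B_-)-f(E)$, together with the palindromic symmetries of $h(\Delta)$ and $h(E)$ and Stanley's palindromic relative $h$-polynomial of a homology ball pair $(B,\partial B)$, into an identity of the form
\[
\gamma(\Delta)-\gamma(E) \;=\; \gamma(B_+,E) + \gamma(B_-,E),
\]
where $\gamma(B,\partial B)$ denotes the $\gamma$-vector of the (palindromic) relative $h$-polynomial $h(B,\partial B;t)$. This should be a straightforward, albeit bookkeeping-heavy, manipulation.

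The second and crucial step is to prove $\gamma(B,\partial B)\ge 0$ coefficientwise for every flag homology ball $B$ with induced flag boundary. I would pursue two complementary strategies: (i) a discrete Morse matching on the interior faces of $B$ whose critical cells are enumerated by $\gamma(B,\partial B)$, in the spirit of Karu's combinatorial approach to local $h$-vectors; and (ii) an induction on $\dim B$, exploiting that vertex links inside $B$ are themselves flag homology balls of one lower dimension with induced flag boundary, reducing the statement to smaller instances of itself.

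The principal obstacle is step two: even in the non-flag setting no general coefficientwise positivity of the analogous relative $h$-vector is known, and the flag hypothesis---though it supplies extra clique structure---rules out convenient tools such as arbitrary shellings. I would first verify the identity of step one on small but nontrivial cases (cross-polytope equators, joins of even cycles, barycentric subdivisions of simplex boundaries) to pin down the correct definition and normalization of $\gamma(B,\partial B)$, and only then attempt the positivity step, probably by combining (i) and (ii): matching interior faces against their $\emph{lowest}$ vertex link and recursing, so that the critical cells produced by the matching themselves live in lower-dimensional flag balls to which the inductive hypothesis applies.
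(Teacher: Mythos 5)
This statement is a conjecture, not a theorem: the paper does not prove it (it only proves reductions and special cases), and your proposal does not prove it either. The gap is in your ``second and crucial step,'' which you explicitly leave as a strategy rather than an argument --- and that step is not a technical lemma but the entire open content of the problem. Concretely: the relative $h$-polynomial $h(B,\partial B;t)$ of a homology ball is \emph{not} palindromic in general (it satisfies the reciprocity $h(B,\partial B;t)=t^d h_B(1/t)$), so the only sensible way to define $\gamma(B_\pm,E)$ is via the coned-off sphere $\Delta_\pm:=B_\pm\cup(E*v_\pm)$, setting $\gamma(B_\pm,E):=\gamma(\Delta_\pm)-\gamma(E)$. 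With that definition your step-one identity is exactly the paper's equation \eqref{eq:gammaD1D2decomposition}, $\gamma(\Delta)=\gamma(\Delta_1)+\gamma(\Delta_2)-\gamma(E)$. But then the positivity $\gamma(B,E)\ge 0$ for every flag homology ball with induced flag boundary is \emph{equivalent} to the Link Conjecture (Conjecture~\ref{conj:link}): given any flag sphere $\Gamma$ and vertex $v$, the pair $(\antist_v\Gamma,\lk_v\Gamma)$ is such a ball pair with $\gamma(\antist_v\Gamma,\lk_v\Gamma)=\gamma(\Gamma)-\gamma(\lk_v\Gamma)$, and conversely every such pair arises as an antistar inside $B\cup(E*v)$. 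The Link Conjecture implies Gal's conjecture and is wide open; neither the discrete-Morse/local-$h$ idea nor the induction on links is carried out, and no known argument makes either work (the paper itself only verifies the link statement for the special family $\mathcal{S}$).

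So what you have actually (re)proved is the reduction of the Equator Conjecture to the Link Conjecture, which is the paper's Proposition~\ref{prop:link->E}, obtained there by precisely this coning construction plus an induction on the number of vertices (the paper additionally observes that $\Delta_1,\Delta_2$ have strictly fewer vertices than $\Delta$ when $E$ is not already a vertex link, which makes the reduction an honest induction rather than a circular one --- a point worth incorporating if you pursue this route). To turn your outline into progress you would need an unconditional proof of $\gamma(B,E)\ge0$ for some nontrivial class of flag ball pairs; as stated, step two assumes a statement at least as strong as the conjecture being proved.
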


Athanasiadis~\cite{Athanasiadis:FlagSd} showed that Gal's conjecture follows from his conjecture that $\gamma(\Delta')\leq \gamma(\Delta)$ when $\Delta$ is a certain subdivision of $\Delta'$, called vertex-induced homology subdivision.
However, the Link Conjecture, which amounts to dimension reduction, does not follow from Athanasiadis' conjecture, as $\Delta$ may not be such subdivision of the suspension of $\lk_v(\Delta)$. To see this, let $\Delta$ be the boundary of the icosahedron. Then $\lk_v(\Delta)$ is the $5$-cycle for every vertex $v$,
and therefore every vertex of $\Delta$ has degree five, and in particular there are three pairwise-adjacent vertices of degree five. However, in the
suspension $\Delta'=\Sigma_{a,b}\lk_v(\Delta)$ there are only two vertices of degree five, namely $a$ and $b$. Since the inverse image of every vertex $w$  of $\Delta'$ in $\Delta$  has at least the same degree as $w$,
it follows that two adjacent vertices of $\Delta$ have $a$ (or $b$) as their
image, which is impossible.


Let $\mathcal{R}$ be the subfamily of \emph{minimal} flag homology spheres, i.e. those that do not admit edge contractions that keep them flag, excluding the octahedral ones;
equivalently, those where each edge belongs to an induced $4$-cycle, excluding the octahedral sphere in each dimension.
It is known and easy that Gal's conjecture reduces to proving it for all $\Delta\in \mathcal{R}$ (see Lemma~\ref{lem:gamma-basicConstructions}), and in~\cite[Conj.6.1]{Lutz-Nevo} it is conjectured that $\gamma_2(\Delta)>0$ for all $\Delta\in
\mathcal{R}$.
In Proposition~\ref{prop:equator-edge_contract} we show that the Equator Conjecture holds if it holds for all $\Delta\in
\mathcal{R}$.
Unconditionally, we verify the validity of the Link conjecture for the following family: Let
$\mathcal{S}$ be the family of boundary complexes of flag polytopes obtained from a crosspolytope by successive edge subdivisions.

\begin{proposition}\label{prop:EforAisbett}
Conjecture~\ref{conj:link}
holds for all $\Delta\in \mathcal{S}$.
\end{proposition}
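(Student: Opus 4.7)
The plan is to induct on the number of edge subdivisions needed to obtain $\Delta$ from its initial crosspolytope $\partial\Diamond_d$. The base case is immediate: if $\Delta=\partial\Diamond_d$ then $\gamma(\Delta)=1$ and every vertex link is $\partial\Diamond_{d-1}$, again with $\gamma=1$, so $\gamma(\lk_v(\Delta))=\gamma(\Delta)$ trivially.

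For the inductive step, let $\Delta'$ be obtained from $\Delta\in\mathcal{S}$ by subdividing an edge $e=\{u,v\}$ with a new vertex $x$. Two ingredients drive the argument. First, the classical edge-subdivision identity
\[
\gamma(\Delta',t)=\gamma(\Delta,t)+t\cdot\gamma(\lk_e(\Delta),t),
\]
which follows from the corresponding $h$-polynomial formula. Second, the description of the vertex links in $\Delta'$: (1) $\lk_u(\Delta')\cong\lk_u(\Delta)$ and $\lk_v(\Delta')\cong\lk_v(\Delta)$ by relabeling; (2) $\lk_w(\Delta')=\lk_w(\Delta)$ for any vertex $w$ of $\Delta$ with $w\notin\{u,v\}\cup\lk_e(\Delta)$; (3) $\lk_w(\Delta')$ is the edge subdivision of $\lk_w(\Delta)$ at $\{u,v\}$ whenever $w\in\lk_e(\Delta)$; and (4) $\lk_x(\Delta')=\susp(\lk_e(\Delta))$, so that $\gamma(\lk_x(\Delta'))=\gamma(\lk_e(\Delta))$.

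The suspension case (4) forces one to strengthen the induction hypothesis beyond Conjecture~\ref{conj:link}. I would carry the following statement: for every face $\sigma$ of $\Delta$, (i) $\gamma(\lk_\sigma(\Delta))\ge 0$, and (ii) for every vertex $w$ of $\lk_\sigma(\Delta)$, $\gamma(\lk_{\sigma\cup\{w\}}(\Delta))\le\gamma(\lk_\sigma(\Delta))$ (face-link monotonicity). Taking $\sigma=\emptyset$ in (ii) recovers Conjecture~\ref{conj:link}. With this IH, the bound $\gamma(\lk_w(\Delta'))\le\gamma(\Delta')$ follows in each case: in (1) and (2) it reads $\gamma(\lk_w(\Delta))\le\gamma(\Delta)\le\gamma(\Delta')$, the last step using $\gamma(\lk_e(\Delta))\ge 0$; in (3), applying the subdivision identity on both sides, the bound reduces to (ii) along the chains $\emptyset\subset\{w\}$ and $e\subset e\cup\{w\}$; in (4), the required inequality $(1-t)\gamma(\lk_e(\Delta))\le\gamma(\Delta)$ follows from (ii) iterated along $\emptyset\subset\{u\}\subset e$ together with $\gamma(\lk_e(\Delta))\ge 0$.

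The main obstacle is closing the induction at the level of arbitrary face links: to invoke (i) and (ii) at the next step, I need to verify them for every face link of $\Delta'$, not merely its vertex links. Since face links of members of $\mathcal{S}$ need not themselves be polytope boundaries, one must carry the induction over the larger class of flag spheres obtained from crosspolytopes by flag-preserving edge subdivisions, which is closed under face link and suspension. A parallel case analysis then shows that each face link of $\Delta'$ is again either unchanged, an edge subdivision of a face link of $\Delta$, or a suspension of an edge-link of $\Delta$, so that (i) and (ii) propagate through these operations.
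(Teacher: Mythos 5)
Your argument is correct and is essentially the paper's own proof: the same induction along the subdivision sequence, the same identity $\gamma(\Delta')=\gamma(\Delta)+t\,\gamma(\lk_e\Delta)$, and the same four-case description of the vertex links of the subdivided sphere. Your strengthened hypothesis (nonnegativity and link-monotonicity over all face links, carried over a class closed under links and suspension) is exactly what the paper packages as Lemma~\ref{lem:LinkForAisbett} --- that $\mathcal{S}$ is closed under suspension and links --- combined with iterated applications of the vertex-link inequality.
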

Replacing Conjecture~\ref{conj:link} with~\ref{conj:E} in above proposition is left open.
We remark that Aisbett~\cite{Aisbett-sd} and Volodin~\cite{Volodin:FFK} proved that for any $\Delta\in \mathcal{S}$, $\gamma(\Delta)$ is the $f$-vector of some flag complex, supporting a conjecture of Nevo and Petersen~\cite{Nevo-Petersen}.

We show in Proposition~\ref{prop:tentative->E} that Conjecture~\ref{conj:E} follows from the following structural conjecture.

\begin{problem}[Structure]\label{prob:tentative}
For all homology flag spheres $\Delta$, one of the following three alternatives must hold:

(0) $\Delta$ is a suspension, or

(i) there exists an edge in $\Delta$ which belongs to no induces $4$-cycle, or

(ii) for every vertex $v\in \Delta$ there exists an equator $E$ in $\Delta$ which is not a vertex link and which does not contain $v$.
\end{problem}
Observe that (0) or (i) must hold if some vertex $v$ in $\Delta$ is nonadjacent to \emph{at most two} vertices: never to zero as $\Delta$ is not a cone, if to exactly one then $\Delta$ is a suspension (over $v$ and the unique nonneighbor of it), and if to exactly two then the two nonneighbors of $v$ form an edge which is in no induced $4$-cycle by~\cite[Lem.3.4]{Labbe-Nevo}.
We prove the structural conjecture above holds when the dimension of $\Delta$ is at most two in Theorem~\ref{thm:dim2}, using this observation.

Let $\Delta_0$ denote the vertex set of $\Delta$. Note that the Link Conjecture implies the average assertion $\sum_{v\in \Delta_0}\gamma(\lk_v(\Delta))\le f_0(\Delta)\gamma(\Delta)$, which implies the $h$-polynomial inequality
\begin{equation}\label{eq:h-ineq}
(1+t)\sum_{v\in \Delta_0} h_{\lk_v(\Delta)}(t)\le f_0(\Delta) h_{\Delta}(t).
\end{equation}
Recall that McMullen's proof of the UBT for polytopes used the inequality
\[\sum_{v\in \Delta_0} h_{\lk_v(\Delta)}(t)\le f_0(\Delta) h_{\Delta}(t).\]
The inequality (\ref{eq:h-ineq}) gives stronger upper bounds for flag homology spheres, however they are not tight. See~\cite{Nevo-Petersen, Adamaszek-Hladky:flagUBC, Zheng-flag_UBT5} for the statement and progress on the UBC for flag homology spheres. Here we prove (\ref{eq:h-ineq}) in the polytope case:

\begin{theorem}\label{thm:h-ineq}
The inequality (\ref{eq:h-ineq}) holds for all flag polytopes; it is tight only for the crosspolytopes.
\end{theorem}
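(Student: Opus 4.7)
My plan is to deduce Theorem~\ref{thm:h-ineq} from the nonlinear inequality
\[
h_1(\Delta)\,h_i(\Delta)\;\ge\;(d-i+1)\,h_{i-1}(\Delta)+(i+1)\,h_{i+1}(\Delta) \qquad (0\le i\le d)\qquad (\ast_i)
\]
announced in the abstract, via an elementary polynomial translation, and then prove $(\ast_i)$ for flag $d$-polytopes using the Lefschetz property of the Artinian reduction of the Stanley--Reisner ring.

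For the reduction, I would first establish the general polynomial identity
\[
\sum_{v\in\Delta_0} h_{\lk_v(\Delta)}(t)\;=\;d\,h_\Delta(t)+(1-t)\,h'_\Delta(t),
\]
valid for any pure $(d-1)$-dimensional simplicial complex, by expanding $h_\Delta(t)=\sum_{F\in\Delta}t^{|F|}(1-t)^{d-|F|}$ and using the incidence count $\sum_v f_{j-1}(\lk_v(\Delta))=(j+1)f_j(\Delta)$. Multiplying the target inequality (\ref{eq:h-ineq}) by $(1+t)$ and using $f_0(\Delta)=h_1(\Delta)+d$, it becomes $(1-t^2)h'_\Delta(t)\le(h_1-dt)h_\Delta(t)$, whose coefficient of $t^i$ reads precisely $(\ast_i)$ (with the convention $h_{-1}=h_{d+1}=0$). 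The endpoints $i=0,d$ are automatic equalities by Dehn--Sommerville, so only $1\le i\le d-1$ requires work.

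To prove $(\ast_i)$ for flag $d$-polytopes, I would pass to the Artinian reduction $A:=k[\Delta]/(\theta_1,\ldots,\theta_d)$, a standard graded Gorenstein algebra of socle degree $d$ with Hilbert function $h_\bullet(\Delta)$. Flagness makes the Stanley--Reisner ideal $I_\Delta$ quadratic, so $A$ is Koszul; polytopality of $\Delta$ provides a Lefschetz element $\omega\in A_1$ with $\omega^{d-2j}\colon A_j\to A_{d-j}$ an isomorphism. Rewriting $(\ast_i)$ in the equivalent form
\[
\dim\ker\bigl(\mu\colon A_1\otimes_k A_i \to A_{i+1}\bigr)\;\ge\;(d-i+1)\,h_{i-1}+i\,h_{i+1},
\]
the goal is to exhibit a subspace of $\ker\mu$ of the required dimension. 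I would build it from two families of Koszul $2$-cycles: the commutation relations $x\otimes ya-y\otimes xa$ with $x,y\in A_1,\ a\in A_{i-1}$, whose span has dimension at least $(d-i+1)h_{i-1}$ using HL-injectivity of $\omega$ on $A_{i-1}$ (in the range $i\le d/2$, with the symmetric range handled dually via $h_j=h_{d-j}$); and the Lefschetz relations $x\otimes\omega b-\omega\otimes xb$ with $x\in A_1,\ b\in A_i$, contributing $i\,h_{i+1}$ further kernel elements. The principal obstacle is to control the overlap of these two families so that their combined span reaches the required total dimension; here the Koszul property (flag hypothesis) enters through the vanishing of Koszul homology $\mathrm{Tor}^A_{p,q}(k,k)$ for $p\ne q$, bounding the overlap.

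For the tightness statement, coefficientwise equality in (\ref{eq:h-ineq}) is equivalent to equality in $(\ast_i)$ for every $i$. A direct analysis of the system of equations $(\ast_1),(\ast_2),\ldots$ under the Dehn--Sommerville relations $h_j=h_{d-j}$ forces $h_1(\Delta)=d$, hence $f_0(\Delta)=2d$; by the classical rigidity that a flag $(d-1)$-sphere with $f_0=2d$ is the boundary of the $d$-crosspolytope, we conclude that $\Delta$ is the boundary of a $d$-crosspolytope, as claimed.
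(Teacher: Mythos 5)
Your opening reduction is fine: using McMullen's identity $\sum_{v}h_{\lk_v\Delta}(t)=d\,h_\Delta(t)+(1-t)h'_\Delta(t)$, the coefficientwise inequality \eqref{eq:h-ineq} is indeed equivalent, term by term, to the family $h_1h_i\ge (d-i+1)h_{i-1}+(i+1)h_{i+1}$. This is exactly the translation the paper performs (in the opposite direction) to deduce Corollary~\ref{cor:h-bound} from Theorem~\ref{thm:h-ineq}. But that means you have only moved the entire content of the theorem into the claim $(\ast_i)$, which the paper regards as the new, nontrivial output of its argument, and your proposed proof of $(\ast_i)$ does not go through. Concretely: (1) your ``Lefschetz relations'' $x\otimes\omega b-\omega\otimes xb$ with $b\in A_i$ do not lie in $A_1\otimes A_i$ at all (degree mismatch); if you instead take $b\in A_{i-1}$ they are a special case of the commutation relations and contribute nothing extra. (2) No argument is given that the commutation relations $x\otimes ya-y\otimes xa$ span a subspace of dimension at least $(d-i+1)h_{i-1}$; weak Lefschetz injectivity of $\omega\colon A_{i-1}\to A_i$ gives no evident lower bound on the rank of the map $\Lambda^2A_1\otimes A_{i-1}\to A_1\otimes A_i$. (3) The decisive step --- controlling the overlap of the two families --- is the one you explicitly leave open, and the appeal to Koszulness via $\mathrm{Tor}^A_{p,q}(k,k)=0$ for $p\ne q$ is not a mechanism but a hope; there is no known way to extract the required count $(d-i+1)h_{i-1}+i\,h_{i+1}$ from Koszul plus Lefschetz, and the paper emphasizes that the inequality ``seems new.'' So the heart of the theorem is missing.

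For contrast, the paper's proof is global and combinatorial rather than local and algebraic: Lemma~\ref{lem:uv-shelling} (a line shelling through a nonedge $uv$) gives $h_{\lk_v\partial P}(t)+t\,h_{\lk_u\partial P}(t)\le h_{\partial P}(t)$, and Theorem~\ref{thm:matching} (every facet of a flag sphere has a disjoint facet, hence a Tutte-type condition holds, hence the complement graph has a half-integral perfect matching) lets one sum this inequality over a cyclic orientation of a partition of the vertex set into nonedges, producing \eqref{eq:h-ineq} directly; the equality analysis then falls out of the equality case of Lemma~\ref{lem:uv-shelling} (suspension over every used nonedge forces the crosspolytope graph). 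Your route uses polytopality only through the $g$-theorem and flagness only through Koszulness, and nothing in your sketch replaces the matching theorem. Finally, your tightness discussion is also only asserted: ``a direct analysis forces $h_1=d$'' can in fact be completed (equality gives $(1-t^2)h'=(h_1-dt)h$, whose degree-$d$ polynomial solutions with $h_0=h_d=1$ and $h_1\ge d$ force $h_1=d$, and then the flag lower bound theorem identifies the crosspolytope), but you would need to write this out; as it stands both the inequality and the equality characterization remain unproved in your proposal.
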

The proof combines a simple shelling argument with the following result, which may be of independent interest. A {\em half-integral perfect matching } in a graph $G$ is a function $f:E(G) \rightarrow \{0,1, \frac{1}{2}\}$ such that
for every vertex $v$ of $G$, $\Sigma_{e \text{ incident with }v}f(e)=1$.
Given a graph $H$, a graph $G$ is {\em the complement of $H$} if $G$ has the same vertex set as $H$, and two vertices are adjacent in $G$ if and only if they are
non-adjacent in $H$. We prove:

\begin{theorem}\label{thm:matching}
Let $G$ be the complement of the $1$-skeleton of a flag homology sphere. Then $G$ has a half-integral perfect matching; equivalently, the vertex set can be partitioned into a matching and odd cycles in $G$.
\end{theorem}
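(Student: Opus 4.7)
The plan is to reduce the existence of a half-integral perfect matching in $G$ to a Hall-type condition on independent sets of $G$, and then verify that condition using the structure of flag homology spheres.

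For the reduction, I would invoke the classical theory of fractional matchings. By Balinski's theorem, the vertices of the fractional matching polytope are half-integral, so $G$ admits a half-integral perfect matching iff it admits a fractional perfect matching; and the fractional matching number satisfies
\[
\nu_f(G) = \tfrac{1}{2}\bigl(|V(G)| - \max_{S \text{ independent}}(|S| - |N_G(S)|)\bigr),
\]
so a fractional perfect matching exists iff $|N_G(S)| \ge |S|$ for every independent set $S$ of $G$.

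To translate this into the language of $\Delta$: since $G$ is the complement of the $1$-skeleton $H$ of $\Delta$, independent sets of $G$ are cliques of $H$, which by flagness are exactly the faces of $\Delta$. For a face $S$, a vertex $w \notin S$ lies outside $N_G(S)$ iff $w$ is $H$-adjacent to every vertex of $S$, iff $S \cup \{w\}$ is a face, iff $w \in V(\lk_\Delta(S))$. Setting $n = f_0(\Delta)$, this gives $|N_G(S)| = n - |S| - f_0(\lk_\Delta(S))$, and the Hall-type condition becomes the face-link inequality
\begin{equation*}
f_0(\lk_\Delta(S)) \le n - 2|S| \quad \text{for every face } S \in \Delta. \tag{$\ast$}
\end{equation*}

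To prove $(\ast)$, I would induct on $|S|$. The base case $|S|=1$ is the statement that every vertex $v$ has a non-neighbor in $\Delta$; otherwise, by flagness, $\Delta$ would equal the cone $\{v\} \join \antist_\Delta(v)$, hence be contractible, contradicting the nontriviality of its reduced homology. For the inductive step, fix $v \in S$, set $S' = S \setminus \{v\}$, and apply the standard identity $\lk_\Delta(S) = \lk_{\lk_\Delta(v)}(S')$; the inductive hypothesis applied inside the flag homology sphere $\lk_\Delta(v)$ (again flag, again a homology sphere) gives $f_0(\lk_\Delta(S)) \le f_0(\lk_\Delta(v)) - 2(|S|-1)$, and the $|S|=1$ case applied to $\Delta$ gives $f_0(\lk_\Delta(v)) \le n - 2$; combining yields $(\ast)$. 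The main obstacle I anticipate is not $(\ast)$ itself but rather the black-box reduction step: precisely invoking Balinski's half-integrality and the fractional Hall-type equivalence, and rendering the dictionary between a half-integral perfect matching and a partition into matching edges and odd cycles explicit. Once that machinery is in place, $(\ast)$ is a short induction resting only on two elementary features of flag homology spheres: links of faces are flag homology spheres, and a homology sphere is not a cone.
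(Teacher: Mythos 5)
Your proposal is correct, but its combinatorial core differs from the paper's. Both arguments pass through the same fractional-matching black box from Scheinerman--Ullman: the paper invokes the isolated-vertices criterion (a graph has a half-integral perfect matching iff $G\setminus X$ has at most $|X|$ isolated vertices for all $X$), while you use the equivalent Hall/deficiency formulation for independent sets; these are interchangeable via the easy dictionary $S\mapsto X=N_G(S)$ and $X\mapsto Y=\{\text{isolated vertices of }G\setminus X\}$, so you should either cite your independent-set formula precisely or include that one-line equivalence rather than lean on the deficiency formula as stated. Where you genuinely diverge is in verifying the condition: the paper's key lemma is that every facet of a flag homology sphere has a disjoint facet (proved by induction on dimension), and it checks the isolated-vertex condition by noting the isolated set $Y$ is a clique, hence a face inside a facet $F$, and comparing with a facet disjoint from $F$. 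You instead observe that by flagness the independent sets of $G$ are exactly the faces $S$ of $\Delta$, translate Hall's condition into the inequality $f_0(\lk_S\Delta)\le f_0(\Delta)-2|S|$, and prove it by induction on $|S|$ using only that links of vertices are flag homology spheres and that a homology sphere is not a cone; the induction is sound (the base case is the non-cone argument, and the step uses $\lk_S\Delta=\lk_{S\setminus\{v\}}(\lk_v\Delta)$ together with the base case applied to $\Delta$). Your route buys a clean quantitative face--link inequality (which for $S$ a facet recovers $f_0(\Delta)\ge 2d$, the octahedral lower bound) and avoids the disjoint-facets lemma altogether, at the modest cost of more care in quoting the fractional-matching theory and in making explicit the translation from a half-integral perfect matching to a partition into matching edges and odd cycles, which the paper handles by citing Proposition~2.2.2 of the same reference.
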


Combining Theorem~\ref{thm:h-ineq} with McMullen's formula (see e.g.\cite[Prop.2.3]{Swartz:k-CM})
\[\sum_{v\in \Delta_0} h_{i-1}(\lk_v(\Delta))=ih_i(\Delta)+(d-i+1)h_{i-1}(\Delta)
\]
gives the following inequality on the $h$-vector, which seems new:
\begin{corollary}\label{cor:h-bound}
For $\Delta$ the boundary complex of a flag $d$-polytope, its $h$-vector satisfies
\begin{equation}\label{eq:h1hi-ineq}
h_1 h_i \ge (d-i+1)h_{i-1} + (i+1) h_{i+1}
\end{equation}
for all $i$.
\end{corollary}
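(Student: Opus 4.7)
The plan is to derive Corollary~\ref{cor:h-bound} directly as a coefficient-by-coefficient consequence of the polynomial inequality~(\ref{eq:h-ineq}) provided by Theorem~\ref{thm:h-ineq}, using McMullen's link identity as the bridge.

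First I would extract the coefficient of $t^i$ from both sides of~(\ref{eq:h-ineq}). Since multiplying by $(1+t)$ corresponds to adding a degree-shift, the LHS coefficient is $\sum_{v\in\Delta_0}\bigl[h_{i-1}(\lk_v(\Delta))+h_i(\lk_v(\Delta))\bigr]$, while the RHS coefficient is $f_0(\Delta)\,h_i(\Delta)$. Next I would apply McMullen's formula twice—once with index $i$ and once with index $i+1$—to rewrite the two link sums on the LHS purely in terms of the global $h$-vector:
\begin{align*}
\sum_{v}h_{i-1}(\lk_v(\Delta)) &= i\,h_i(\Delta) + (d-i+1)\,h_{i-1}(\Delta),\\
\sum_{v}h_i(\lk_v(\Delta)) &= (i+1)\,h_{i+1}(\Delta) + (d-i)\,h_i(\Delta).
\end{align*}
Adding these, the combined coefficient of $h_i(\Delta)$ collapses to $d$, yielding
\[
d\,h_i(\Delta) + (d-i+1)\,h_{i-1}(\Delta) + (i+1)\,h_{i+1}(\Delta) \;\le\; f_0(\Delta)\,h_i(\Delta).
\]

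Finally I would use the standard identity $f_0(\Delta)=h_1(\Delta)+d$, valid for any $(d-1)$-dimensional simplicial homology sphere (it follows at once from the definition $h_1 = f_0 - d$). Substituting and cancelling the $d\,h_i(\Delta)$ term on each side gives exactly $h_1(\Delta)\,h_i(\Delta)\ge (d-i+1)h_{i-1}(\Delta)+(i+1)h_{i+1}(\Delta)$, as claimed.

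There is no substantive obstacle: the argument is a mechanical assembly of two ingredients already in hand. The only care needed is bookkeeping at the boundary indices $i=0$ and $i=d$, which one handles with the standard conventions $h_{-1}(\Delta)=h_{d+1}(\Delta)=0$ (and the analogous conventions for link $h$-vectors), after which the derivation above is uniform in $i$.
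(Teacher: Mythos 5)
Your proposal is correct and is exactly the paper's intended derivation: the paper obtains the corollary by combining Theorem~\ref{thm:h-ineq} with McMullen's formula, which is precisely your coefficient extraction from~(\ref{eq:h-ineq}) together with the two applications of the link identity and the substitution $f_0 = h_1 + d$. Your explicit bookkeeping (including the conventions at $i=0$ and $i=d$) fills in the routine details the paper leaves implicit, with no discrepancy.
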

For comparison, for the cyclic $d$-polytope and $i<d/2$,
$h_1 h_i < i h_{i+1}$.
In Section~\ref{sec:Balanced} we show that (\ref{eq:h1hi-ineq}) holds for boundary complexes of balanced $d$-polytopes as well, namely when the $1$-skeleton is vertex $d$-colorable.

\textbf{Outline.} In Sec.~\ref{sec:Prelim} we set notation, recall the $\gamma$-vector and its relation to vertex splits and other basic constructions.
In Sec.~\ref{sec:E} we prove results towards the Equator Conjecture.
In Sec.~\ref{sec:half} we prove Theorems~\ref{thm:h-ineq} and~\ref{thm:matching}.
Sec.~\ref{sec:Balanced} applies ideas from Sec.~\ref{sec:half} to the balanced case.

\section{Preliminaries}\label{sec:Prelim}
For the basics on face enumeration needed here we refer to e.g. Stanley's book~\cite{Stanley:CombinatoricsCommutativeAlgebra-96}
or the recent surveys by Klee-Novik~\cite{Klee-Novik:f-survey} and Zheng~\cite{Zheng:flag-survey}; for basics on polytopes refer to e.g. the textbooks by Gr\"{u}nbaum~\cite{Grunbaum:ConvexPolytopes-03} and Ziegler~\cite{Ziegler}.

\subsection{Simplicial complexes}
A \emph{simplicial complex} $\Delta$ is a finite collection of subsets of $[n]:=\{1,\dots, n\}$, called \emph{faces},  closed under containment.
A face of cardinality $k+1$ has \emph{dimension}~$k$, called a $k$-face; the dimension of $\Delta$ is $\dim\Delta:=-1+\max\{|\sigma|:\ \sigma\in\Delta\}$.
Faces of dimension $0$ (resp. $1$) are called \emph{vertices} (resp. \emph{edges}); together they form the $1$-skeleton, or \emph{graph} of $\Delta$.
Then $\Delta$ is \emph{flag} if its faces are exactly the cliques over its graph.

Given a face $\sigma\in\Delta$, the (closed) ~\emph{star},~\emph{antistar}, and~\emph{link} of~$\sigma$ in~$\Delta$ are the following subcomplexes of $\Delta$:
\begin{align*}
\st_{\sigma}\Delta & := \{\tau \in \Delta : \sigma\cup \tau \in \Delta \}, \\
\antist_{\sigma}\Delta & := \{\tau\in \Delta : \sigma\not\subseteq \tau\}, \\
\lk_{\sigma}\Delta & := \{\tau \in \Delta : \sigma \cup \tau \in \Delta,~\ \sigma \cap \tau=\emptyset\}.
\end{align*}
Then for any vertex $v\in\Delta$, $\Delta=\st_{v}\Delta \cup_{\lk_{v}\Delta}\antist_{v}\Delta$.
(We will keep abusing notation writing $v$ for the singleton $\{v\}$.)

Call $\Delta$ a \emph{homology sphere} (over a field $\mathbb{F}$) if for all faces $\sigma\in \Delta$ the reduced homology groups with coefficients in $\mathbb{F}$ satisfy
\[ \widetilde{H}_i(\lk_\sigma\Delta,\mathbb{F})=\begin{cases} 0 & \text{if } i<\dim\Delta-|\sigma|, \\ \mathbb{F} & \text{if } i=\dim\Delta-|\sigma|.\end{cases}
\]
Call $\Delta$ pure if all its maximal faces (w.r.t. inclusion) have the same dimension.
A pure
$(d-1)$-dimensional simplicial complex $B$ is a \emph{homology ball} (over $\mathbb{F}$) if (i) for all faces $\sigma\in \Delta$ the link $\lk_\sigma\Delta$ is either a $(d-1-|\sigma|)$-dimensional homology sphere over $\mathbb{F}$ or is homologically $\mathbb{F}$-acyclic, and (ii) the faces of $B$ with acyclic link form a
$(d-2)$-dimensional homology sphere over $\mathbb{F}$.

Recall a subcomplex $X$ of $\Delta$ is \emph{induced} if $X=\Delta[W]:=\{\sigma\in\Delta:\ \sigma\subseteq W\}$ for some subset $W$ of the vertex set of $\Delta$.
Given a homology sphere $\Delta$, an induced codimension 1 homology sphere $E\subseteq\Delta$ is called an \emph{equator} of~$\Delta$.
By Jordan--Alexander theorem, $\Delta$ is decomposed into two homology balls intersecting in $E$, denoted $\Delta=B_1 \cup_E B_2$.

The \emph{join} of two simplicial complexes $\Delta_i$, $i=1,2$, on disjoint vertex sets is
\[\Delta_1*\Delta_2:=\{\sigma_1\cup\sigma_2:~\ \sigma_i\in\Delta_i,\ i=1,2\}.\]
Important instances are the case of a \emph{cone}, where $\Delta_2=\{\emptyset,\{v\}\}$ and we simply write $\Delta_1*v$ for the join, and the case of \emph{suspension},
where $\Delta_2=\{\emptyset,\{v\},\{u\}\}$ and we simply write $\susp_{u,v}\Delta_1$ for the join.
The join of the two-point complex with itself $d$ times is the \emph{octahedral} $(d-1)$-sphere; it can be realized as the boundary of the $d$-crosspolytope.
It is the unique minimizer of the number of vertices (and $i$-faces, for all $i$) among all flag homology $(d-1)$-spheres, e.g.~\cite{Gal}, \cite{Meshulam-domination}.

The \emph{contraction} of $\Delta$ by an edge $e=uv\in\Delta$ is the complex
\[\Delta':=\antist_v\Delta\cup u* \antist_u\st_v\Delta,\]
obtained by replacing $v$ by $u$ in faces containing $v$ in $\Delta$. Then $\Delta$ is obtained from $\Delta'$ by a \emph{vertex split} at $u$.
We recall the following known facts, see e.g.~\cite[Lem.2.1]{Labbe-Nevo}.

\begin{lemma}\label{lem:elementary}
Let $\Delta$ be a $(d-1)$-dimensional flag homology sphere, $\sigma\in\Delta$, and $e\in \Delta$ an edge. Then:
\begin{enumerate}[label=\roman{enumi}),ref=\ref{lem:elementary}~\roman{enumi})]
 \item The link $\lk_\sigma\Delta$ is a flag induced homology sphere, hence an equator when $\sigma=\{v\}$. \label{lem:Llink_flag}
 \item The contraction of $\Delta$ by $e$ is a flag homology sphere if and only if $e$ is not contained in an induced $4$-cycle in the graph of $\Delta$. \label{lem:Lcontract_flag}
\end{enumerate}
\end{lemma}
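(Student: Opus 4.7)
My plan is to treat the two parts separately: (i) is essentially an unwinding of definitions, while (ii) combines a combinatorial characterization of flagness failures with the classical link condition that guarantees preservation of the sphere property under edge contraction.

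For (i), the homology sphere property of $\lk_\sigma\Delta$ is built into the definition of homology sphere for $\Delta$, since the link of $\sigma$ is explicitly required to have the prescribed homology. For flagness, I would take a pairwise-adjacent set $S$ of vertices in $\lk_\sigma\Delta$: every $a\in S$ is adjacent in $\Delta$ to every vertex of $\sigma$ (as $\sigma\cup\{a\}\in\Delta$), and every pair in $S$ is an edge of $\Delta$, so $S\cup\sigma$ is pairwise adjacent in $\Delta$; flagness of $\Delta$ then gives $S\cup\sigma\in\Delta$, i.e.\ $S\in\lk_\sigma\Delta$. For induced-ness, I would set $W:=\{w\in\Delta_0\setminus\sigma:w\text{ is adjacent to every vertex of }\sigma\}$ and note that the same flagness argument identifies $\Delta[W]$ with $\lk_\sigma\Delta$. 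When $\sigma=\{v\}$, the resulting link has codimension one, so it is an equator by definition.

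For (ii), the first step is to unpack what it means for $uv$ to lie in an induced $4$-cycle: there must exist $a,b\in\Delta_0$ with $va,ab,ub\in\Delta$ but $ua,vb\notin\Delta$. In the forward direction (by contrapositive) I would show that such $a,b$ prevent $\Delta'$ from being flag: in $\Delta'$ the triple $\{u',a,b\}$ (where $u'$ is the identified vertex) is pairwise adjacent, yet its only possible preimages in $\Delta$ under the contraction map, namely $\{u,a,b\}$ and $\{v,a,b\}$, are not faces of $\Delta$ because $ua\notin\Delta$ and $vb\notin\Delta$. Conversely, assuming no induced $4$-cycle contains $uv$, I would show that any clique $S\ni u'$ in $\Delta'$, with $S'=S\setminus\{u'\}$, has the property that every vertex of $S'$ is adjacent in $\Delta$ to a common endpoint of $uv$; otherwise a pair $a,b\in S'$ with $va,ub\in\Delta$, $ua,vb\notin\Delta$ could be extracted, and the edge $ab$ is present in $\Delta$ because $S'$ is a clique in $\Delta$ not involving $u,v$, reproducing the forbidden $4$-cycle. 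In either alternative, flagness of $\Delta$ yields the required face of $\Delta$ containing $u$ or $v$, whose image is $S$.

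To finish (ii) I still need $\Delta'$ to be a homology sphere, not merely a flag complex. Here I would invoke the link condition $\lk_u\Delta\cap\lk_v\Delta=\lk_{uv}\Delta$, which is automatic in any flag complex for the edge $uv$: a face $\tau$ in the left side together with $\{u,v\}$ is a clique, hence a face by flagness, showing $\tau\in\lk_{uv}\Delta$. The link condition is the classical Walkup-type criterion guaranteeing that the edge contraction $\Delta\rightsquigarrow\Delta'$ preserves the homology sphere structure (the $(d-2)$-cycle in $\lk_u\Delta\cup\lk_v\Delta$ glues correctly), so $\Delta'$ remains a homology $(d-1)$-sphere. The only substantive check in the whole lemma is this last topological step; the rest is routine definition-chasing.
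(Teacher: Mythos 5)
The paper never proves this lemma: it records both statements as known facts and points to Labb\'e--Nevo, Lemma 2.1. Your argument is correct and essentially reconstructs the proof one finds in that cited literature. The purely combinatorial parts are done correctly from first principles: flagness and induced-ness of $\lk_\sigma\Delta$ via the clique argument with $W$, the fact that an induced $4$-cycle through $e=uv$ produces an empty triangle $\{u',a,b\}$ in the contraction, and conversely that in the absence of such a cycle every clique of the contraction through $u'$ lifts to a clique of $\Delta$ containing $u$ or $v$. The topological step is delegated, exactly as in the cited source, to the link-condition criterion $\lk_u\Delta\cap\lk_v\Delta=\lk_{uv}\Delta$, together with your (correct) observation that this condition is automatic for any edge of a flag complex; this is precisely why the whole equivalence in (ii) reduces to a statement about flagness. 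Three small points to tighten. First, in (i), being a homology sphere is a condition on \emph{all} face links, so you should invoke the identity $\lk_\tau(\lk_\sigma\Delta)=\lk_{\sigma\cup\tau}\Delta$ to transfer the homology hypothesis to every face of the link, not only to $\sigma$ itself (the same identity gives purity, hence the codimension-one claim when $\sigma=\{v\}$). Second, in the forward direction of (ii) the list of possible preimages of $\{u',a,b\}$ should also include $\{u,v,a,b\}$, which is excluded for the same reason ($ua\notin\Delta$). Third, the only substantive external input is the statement that an edge contraction satisfying the link condition turns a homology sphere into a homology sphere; make sure you quote it in its homology (not merely PL) form, e.g. from Nevo's work on edge contractions and the link condition, since that is the version actually needed here and it is the same result the cited lemma of Labb\'e--Nevo relies on.
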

A particularly simple case of vertex split is that of stellar subdivision of $\Delta'$ at an edge $e=uv$, by introducing say a new vertex $v_e$. This operation preserves being flag. Then the inverse operation is contracting the edge $uv_e$ (or $vv_e$, they both give back the original complex).
In the case when $\Delta'=\bd P$ is the boundary complex of a simplicial polytope $P$, subdividing $e$ can be realized by placing $v_e$ \emph{beyond} $e$, thus the resulted $\Delta$ is again the boundary complex of a simplicial polytope.


\subsection{$f-,h-,\gamma$-vectors}
For a $(d-1)$-dimensional simplicial complex $\Delta$ let $f_i(\Delta):=|\{\sigma\in\Delta:\ |\sigma|=i+1\}|$ denote the number of $i$-faces of $\Delta$, and $f(\Delta):=(f_{i-1}(\Delta))_{i=0}^{d}$ denote its $f$-vector; equivalently, let $f_{\Delta}(t):=\sum_{i=0}^{d} f_{i-1}(\Delta)t^i$ denote its $f$-polynomial.

Define the $h$-polynomial and $h$-vector of $\Delta$ by the equality
\[x^d \sum_{i=0}^d h_i(\Delta)(\frac{1}{x})^i =
(x-1)^d \sum_{i=0}^d f_{i-1}(\Delta)(\frac{1}{x-1})^i.
\]
When $\Delta$ is a flag homology sphere,
the Dehn-Sommerville relations assert that $h_i(\Delta)=h_{d-i}(\Delta)$ for all $0\le i\le d$. Being a palindrome, one can express $h_{\Delta}(t)$ as
\[h_{\Delta}(t)=\sum_{i=0}^{\lfloor\frac{d}{2}\rfloor} \gamma_i t^i (1+t)^{d-2i},\]
and the $\gamma_i$s define the $\gamma$-vector and $\gamma$-polynomial of $\Delta$, namely $\gamma_{\Delta}(t)=\sum_{i=0}^{\lfloor\frac{d}{2}\rfloor} \gamma_i t^i$.
(We will switch between the $f,h,\gamma$-vectors and polynomials freely as convenient, where coefficientwise $\ge$ or $=$ between vectors of different length means by interpreting them as polynomials.)

We collect the following easy facts on the behavior of $\gamma$-polynomials
under basic constructions.
\begin{lemma}[\cite{Gal}]\label{lem:gamma-basicConstructions}
Let $\Delta$ be a flag homology sphere and $e\in\Delta$ an edge.
Then,
\begin{enumerate}[label=\roman{enumi}),
ref=\ref{lem:gamma-basicConstructions}~\roman{enumi})]
 \item
 the suspension satisfies $\gamma(\susp_{a,b}\Delta)=\gamma(\Delta)$;
\label{lem:gamma-susp}
\item
for $\Delta'$ the contraction of $\Delta$ by $e$, $\gamma_{\Delta}(t)=\gamma_{\Delta'}(t)+
t\gamma_{\lk_e\Delta}(t)$.
\label{eq:gamma-contraction}
\end{enumerate}
\end{lemma}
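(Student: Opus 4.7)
The plan is to work at the level of $h$-polynomials and then translate to $\gamma$-polynomials via the change of basis $h_\Delta(t)=\sum_i \gamma_i(\Delta)\,t^i(1+t)^{d-2i}$ for a $(d-1)$-dimensional flag homology sphere. For part (i), observe that $\susp_{a,b}\Delta$ is the join $\Delta\join\{\{a\},\{b\}\}$ and the $h$-polynomial is multiplicative under joins. Since the $h$-polynomial of the $0$-sphere is $1+t$, I obtain $h_{\susp_{a,b}\Delta}(t)=(1+t)h_\Delta(t)=\sum_i \gamma_i(\Delta)\,t^i(1+t)^{d+1-2i}$, which is precisely the $\gamma$-expansion of the $d$-dimensional sphere $\susp_{a,b}\Delta$ with the same coefficients.

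For part (ii), the key step is the $f$-polynomial identity
\[
f_\Delta(t)-f_{\Delta'}(t)=t(1+t)\,f_{\lk_e\Delta}(t),
\]
which I would establish by partitioning the faces of $\Delta$ according to their intersection with $e=\{u,v\}$ into four classes (containing neither, only $u$, only $v$, both) and those of $\Delta'$ by whether they contain $u$. Writing $L_u:=\{\sigma\in\lk_u\Delta:v\notin\sigma\}$ and $L_v:=\{\sigma\in\lk_v\Delta:u\notin\sigma\}$, this yields
\[
f_\Delta(t)=f_{\antist_u\antist_v\Delta}(t)+t\,f_{L_u}(t)+t\,f_{L_v}(t)+t^2 f_{\lk_e\Delta}(t)
\]
and $f_{\Delta'}(t)=f_{\antist_u\antist_v\Delta}(t)+t\,f_{\lk_u\Delta'}(t)$. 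The central combinatorial identity to check is $\lk_u\Delta'=L_u\cup L_v$ with $L_u\cap L_v=\lk_e\Delta$; the intersection equality uses flagness of $\Delta$, since any $\sigma$ adjacent in the $1$-skeleton to both $u$ and $v$ must, together with the edge $uv$, be a face. Inclusion-exclusion then yields the displayed identity.

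To convert to $h$-polynomials I apply the substitution $t\mapsto t/(1-t)$ and multiply by $(1-t)^d$; the factor $t(1+t)$ becomes $t/(1-t)^2$, and the remaining $(1-t)^{d-2}f_{\lk_e\Delta}(t/(1-t))$ equals $h_{\lk_e\Delta}(t)$ by the usual $f$-$h$ transformation applied to the $(d-3)$-dimensional complex $\lk_e\Delta$, giving $h_\Delta(t)-h_{\Delta'}(t)=t\,h_{\lk_e\Delta}(t)$. Re-expanding the right side in the $\gamma$-basis for $(d-1)$-spheres, I find $t\,h_{\lk_e\Delta}(t)=\sum_j\gamma_{j-1}(\lk_e\Delta)\,t^j(1+t)^{d-2j}$, whose $\gamma$-polynomial is $t\,\gamma_{\lk_e\Delta}(t)$; hence $\gamma_\Delta(t)-\gamma_{\Delta'}(t)=t\,\gamma_{\lk_e\Delta}(t)$. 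The main obstacle is the combinatorial identity $\lk_u\Delta'=L_u\cup L_v$ together with its intersection; after that, everything is formal polynomial manipulation. The hypothesis that $e$ lies in no induced $4$-cycle enters only via Lemma~\ref{lem:Lcontract_flag} to ensure $\Delta'$ is itself a flag homology sphere, so that $\gamma(\Delta')$ is well-defined and palindromic.
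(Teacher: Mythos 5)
Your proof is correct. The paper itself gives no argument for this lemma --- it is quoted from Gal's paper --- and your computation is exactly the standard one: multiplicativity of $h$ under joins for (i), and for (ii) the face-count identity $f_\Delta-f_{\Delta'}=t(1+t)f_{\lk_e\Delta}$ obtained from $\lk_u\Delta'=L_u\cup L_v$ with $L_u\cap L_v=\lk_e\Delta$ (flagness plus $uv$ being an edge), followed by the $f$-to-$h$ substitution and uniqueness of the $\gamma$-expansion. Your closing remark is also the right reading of the statement: the no-induced-$4$-cycle hypothesis is not needed for the polynomial identity $h_\Delta=h_{\Delta'}+t\,h_{\lk_e\Delta}$, but only (via Lemma~\ref{lem:Lcontract_flag}) to guarantee that $\Delta'$ is again a flag homology sphere so that $h_{\Delta'}$ is palindromic and $\gamma_{\Delta'}$ is defined, which is the setting in which the paper invokes the lemma.
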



\section{Towards the Equator conjecture}\label{sec:E}

First, we reduce the Equator conjecture to the Link conjecture.
\begin{proposition}~\label{prop:link->E}
Fix $d$ and $n$. Then the assertion of
Conjecture~\ref{conj:link} holds for all homology $d$-spheres with at most $n$ vertices if and only if the assertion of
Conjecture~\ref{conj:E} holds for all homology $d$-spheres with at most $n$ vertices.
\end{proposition}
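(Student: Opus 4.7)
The easy direction (Equator $\Rightarrow$ Link) is immediate: for a flag homology sphere $\Delta$ and any vertex $v$, flagness forces $\lk_v(\Delta) = \Delta[N(v)]$, an induced flag codimension-$1$ subsphere, hence an equator (by Lemma~\ref{lem:Llink_flag}); the vertex count of $\lk_v(\Delta)$ does not exceed that of $\Delta$. So I would focus on the converse.

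Given an equator $E$ in a flag homology $d$-sphere $\Delta$ with $|V(\Delta)|\le n$, the plan is to reduce $E$ to a vertex link by ``capping off'' one side of the Jordan--Alexander decomposition $\Delta = B_1 \cup_E B_2$ with a cone, producing \emph{two} flag homology spheres and then averaging. For $i=1,2$, I set
\[
\widetilde{\Delta}_i := B_i \cup_E (v_i \ast E),
\]
with $v_i$ a new vertex, and aim to verify: (i) $\widetilde{\Delta}_i$ is a flag homology $d$-sphere, (ii) $\lk_{v_i}(\widetilde{\Delta}_i) = E$, and (iii) $|V(\widetilde{\Delta}_i)|\le n$.

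The delicate point is the flag part of (i). It will rest on the preliminary observation that each $B_i$ is itself induced in $\Delta$: a face of $\Delta$ contained in $V(B_i)$ is either already in $B_i$, or lies in $B_{3-i}$ with vertex set inside $V(B_1)\cap V(B_2)=V(E)$, hence lies in $E = \Delta[V(E)]\subseteq B_i$. Given this, flagness of $\widetilde{\Delta}_i$ follows from a short case split on whether a candidate clique contains $v_i$, using that $v_i$ is adjacent only to vertices of $E$. The sphere property in (i) is standard: two homology $d$-balls glued along their common boundary homology $(d-1)$-sphere. Claim (ii) is immediate from the construction; and for (iii), $E$ being induced of codimension $1$ forces $B_{3-i}$ to have at least one interior vertex (else $B_{3-i}\subseteq \Delta[V(E)]=E$, contradicting the dimension drop), so $|V(B_i)|\le |V(\Delta)|-1\le n-1$.

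The hypothesis (Link Conjecture on flag homology $d$-spheres with at most $n$ vertices) then applies to each $\widetilde{\Delta}_i$, yielding $\gamma(E)\le\gamma(\widetilde{\Delta}_i)$ for $i=1,2$. To conclude, I would establish the additivity identity
\[
\gamma(\widetilde{\Delta}_1)+\gamma(\widetilde{\Delta}_2) = \gamma(\Delta)+\gamma(\susp E) = \gamma(\Delta)+\gamma(E),
\]
where the second equality is Lemma~\ref{lem:gamma-susp}. This is a routine inclusion--exclusion check at the $f$-polynomial level: both sides equal $f_{B_1}(t)+f_{B_2}(t)+2t\,f_E(t)$, using $f_{v\ast E}(t)=(1+t)f_E(t)$, and transfers to $\gamma$-polynomials since all four complexes have dimension $d$. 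Summing the two link inequalities then gives $2\gamma(E)\le\gamma(\Delta)+\gamma(E)$, i.e., $\gamma(E)\le\gamma(\Delta)$, as desired.
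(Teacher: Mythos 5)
Your proposal is correct and follows essentially the same route as the paper: cap each side $B_i$ of the Jordan--Alexander decomposition with a cone $v_i \ast E$, apply the Link Conjecture to the resulting spheres $\Delta_i$ (in which $E$ is the link of $v_i$), and combine via the inclusion--exclusion identity $\gamma(\Delta_1)+\gamma(\Delta_2)=\gamma(\Delta)+\gamma(E)$, using that suspension preserves $\gamma$. Your write-up additionally spells out the flagness of $\Delta_i$ and the vertex count (needing only one interior vertex per side, which handles the case that $E$ is a vertex link uniformly rather than separately), but the argument is the same.
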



\begin{proof}
Let $\Delta$ be a flag homology sphere.
As every vertex link is an induced subcomplex, and hence an equator, the assertion of Conjecture~\ref{conj:E} for $\Delta$ clearly implies the assertions of
Conjecture~\ref{conj:link} for $\Delta$.

For the converse implication,
let $E$ be an equator of $\Delta$ and not a vertex link. Thus, it decomposes $\Delta$ as the union of two homology balls $B_1$ and $B_2$ with common boundary $E$, such that in each $B_i$ there are at least \emph{two} interior vertices.

Consider the flag homology spheres $\Delta_i=B_i\cup (E*v_i)$ where the cone vertex $v_i$ of $E$ is not in $B_i$, for $i=1,2$.
Then the $f$-polynomials satisfy \[f_{\Delta}(t)=f_{\Delta_1}(t)+f_{\Delta_2}(t)-f_{\Sigma_{v_1,v_2}E}(t).\]
Translating into $\gamma$-polynomials, and using the fact that suspension does not change the $\gamma$-polynomial, gives
\begin{equation}\label{eq:gammaD1D2decomposition}
\gamma_{\Delta}(t)=\gamma_{\Delta_1}(t)+\gamma_{\Delta_2}(t)-\gamma_{E}(t).
\end{equation}
Now,
$E$ is a vertex link in $\Delta_i$,
and $\Delta_i$ has fewer vertices then $\Delta$ and same dimension as $\Delta$,
so by Conjecture~\ref{conj:link} $\gamma_E(t)\le \gamma_{\Delta_i}(t)$. Combining with (\ref{eq:gammaD1D2decomposition}) gives $\gamma_{\Delta}(t)\ge 2\gamma_E(t)-\gamma_E(t)=\gamma_E(t)$ as claimed.
\end{proof}

Next, we reduce the Equator conjecture for all flag homology spheres to the subfamily of minimal ones.
\begin{proposition}~\label{prop:equator-edge_contract}
Let $e=uv$ be an edge in a flag homology sphere $\Delta$ and in no induced $4$-cycle in $\Delta$.
If Conjecture~\ref{conj:E} holds for all flag homology spheres of dimension $\le \dim\Delta$ and with $<f_0(\Delta)$ vertices (and all equators in them), then it holds for $\Delta$ (and all equators $E$ in $\Delta$).
\end{proposition}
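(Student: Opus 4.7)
The plan is to reduce the Equator Conjecture for $\Delta$ to the Link Conjecture for $\Delta$, and then establish the latter via the edge contraction $\Delta':=\Delta/e$. By Lemma~\ref{lem:Lcontract_flag}, $\Delta'$ is a flag homology sphere with $f_0(\Delta)-1$ vertices, and by Lemma~\ref{eq:gamma-contraction},
\[\gamma(\Delta)=\gamma(\Delta')+t\,\gamma(\lk_e\Delta),\]
with both $\Delta'$ and $\lk_e\Delta$ in the scope of the induction hypothesis. The reduction from Equator to Link for $\Delta$ proceeds exactly as in the proof of Proposition~\ref{prop:link->E}: given an equator $E\subset\Delta$ which is not a vertex link, the spheres $\Delta_i=B_i\cup(E\ast v_i)$ are flag homology spheres of dimension $\dim\Delta$ with strictly fewer than $f_0(\Delta)$ vertices (each $B_i$ has at least two interior vertices), so the IH applied to $\Delta_i$ combined with $\gamma(\Delta)=\gamma(\Delta_1)+\gamma(\Delta_2)-\gamma(E)$ yields $\gamma(E)\leq\gamma(\Delta)$. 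For later use I record that the IH also yields Gal's inequality $\gamma(\lk_e\Delta)\geq 0$, via a short downward induction on dimension: Equator for a smaller $\Sigma$ gives $\gamma(\lk_v\Sigma)\leq\gamma(\Sigma)$ and, combined with $\gamma(\lk_v\Sigma)\geq 0$ by the same induction on lower dimension, forces $\gamma(\Sigma)\geq 0$.

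To prove $\gamma(\lk_w\Delta)\leq\gamma(\Delta)$ for a vertex $w$, I would distinguish three cases according to the adjacency of $w$ to $u$ and $v$. In case (i), $w\notin\{u,v\}$ and $w$ is non-adjacent to at least one of $u,v$, and I claim $\lk_w\Delta\cong\lk_w\Delta'$: up to possibly renaming $v$ to the merged vertex $u$, the natural face bijection between these two induced subcomplexes respects flagness, the no-induced-$4$-cycle hypothesis on $e$ being used precisely to rule out the only potential obstruction, namely a face $u\cup\sigma'\in\Delta'$ with $v\cup\sigma'\in\Delta$ but $u\cup\sigma'\notin\Delta$ — such a face would produce a vertex $z\in\sigma'$ and an induced $4$-cycle $u$--$v$--$z$--$w$--$u$ through $e$. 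Hence $\gamma(\lk_w\Delta)=\gamma(\lk_w\Delta')\leq\gamma(\Delta')\leq\gamma(\Delta)$, using the IH on $\Delta'$ and $\gamma(\lk_e\Delta)\geq 0$. In case (ii), $w\in N(u)\cap N(v)$, the edge $e$ lies in the induced flag sphere $\lk_w\Delta$ and in no induced $4$-cycle of it (such a cycle would be induced in $\Delta$ too); a direct verification gives $\lk_w\Delta'=(\lk_w\Delta)/e$, so Lemma~\ref{eq:gamma-contraction} applied to $\lk_w\Delta$ yields $\gamma(\lk_w\Delta)=\gamma(\lk_w\Delta')+t\,\gamma(\lk_w(\lk_e\Delta))$, and the IH (applied to $\Delta'$ at $w$, and to $\lk_e\Delta$ at $w$) bounds the right-hand side term-by-term by the corresponding decomposition of $\gamma(\Delta)$.

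Case (iii), $w\in\{u,v\}$, is the main obstacle. Say $w=v$. I would first decompose $\lk_u\Delta'=\antist_v\lk_u\Delta\cup_{\lk_e\Delta}\antist_u\lk_v\Delta$ as a union of two homology balls glued along their common boundary $\lk_e\Delta$, and note that coning off each ball by the appropriate apex vertex recovers $\lk_u\Delta$, resp.\ $\lk_v\Delta$. The inclusion-exclusion argument of Proposition~\ref{prop:link->E} then yields the key identity
\[\gamma(\lk_u\Delta')=\gamma(\lk_u\Delta)+\gamma(\lk_v\Delta)-\gamma(\lk_e\Delta).\]
Applying the IH to $\lk_u\Delta'$ (an equator of $\Delta'$) and to $\lk_v\lk_u\Delta=\lk_e\Delta$ (an equator of the smaller $\lk_u\Delta$), together with $\gamma(\lk_e\Delta)\geq 0$, gives
\[\gamma(\Delta)-\gamma(\lk_v\Delta)\geq \bigl(\gamma(\lk_u\Delta)-\gamma(\lk_e\Delta)\bigr)+t\,\gamma(\lk_e\Delta)\geq 0.\]
The delicate point is that the ``defect'' $(t-1)\gamma(\lk_e\Delta)$ arising from the naïve combination of the two contraction identities is exactly absorbed by the non-negative quantity $\gamma(\lk_u\Delta)-\gamma(\lk_e\Delta)$ coming from Equator for the smaller complex $\lk_u\Delta$.
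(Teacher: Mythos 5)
Your proposal is correct, but in the decisive case it takes a genuinely different route from the paper. Both arguments run the same induction and use the same two tools, the contraction identity $\gamma(\Delta)=\gamma(\Delta')+t\,\gamma(\lk_e\Delta)$ of Lemma~\ref{eq:gamma-contraction} and the capping identity \eqref{eq:gammaD1D2decomposition}; but the paper organizes the proof by how $e$ meets a given equator $E$ (disjoint, contained, or in one vertex), and in the residual situation $E=\lk_v\Delta$ it introduces the ball $\st_v\Delta\cup\st_u\Delta$, its boundary $E''$, the auxiliary spheres $\Delta''$ and $\Delta'''$, and a final sub-case $\Delta=\Delta''$ settled by an explicit $h$-polynomial computation. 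You instead first reduce to the Link Conjecture for $\Delta$ itself (equators that are not vertex links are dispatched by the induction hypothesis applied to the strictly smaller capped spheres $\Delta_i$, exactly as in Proposition~\ref{prop:link->E}), and then split on the adjacency of $w$ to $u,v$; your cases (i) and (ii) parallel the paper's easy cases, while your case (iii) replaces all of the paper's auxiliary constructions by the single identity
\[\gamma(\lk_u\Delta')=\gamma(\lk_u\Delta)+\gamma(\lk_v\Delta)-\gamma(\lk_e\Delta),\]
obtained by writing $\lk_u\Delta'=\antist_v\lk_u\Delta\cup_{\lk_e\Delta}\antist_u\lk_v\Delta$ and running the inclusion--exclusion of Proposition~\ref{prop:link->E} one dimension down; I checked this identity and the ensuing chain of inequalities, and they are valid (flagness is used to see that the two pieces meet exactly in $\lk_e\Delta$, and the induction hypothesis applied to $\lk_u\Delta$ with equator $\lk_v\lk_u\Delta=\lk_e\Delta$ supplies the nonnegativity of $\gamma(\lk_u\Delta)-\gamma(\lk_e\Delta)$ that absorbs the defect). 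Your route is more uniform: the paper's final identity $\gamma(\lk_w\Delta)+\gamma(\lk_e\Delta)=\gamma(\lk_u\Delta)+\gamma(\lk_v\Delta)$ in the case $\Delta=\Delta''$ is precisely the special instance of your identity in which $\lk_u\Delta'$ is a suspension of $\lk_w\Delta$, whereas the paper's route works with the equator $E$ directly and never needs the preliminary reduction to links. Your explicit remark that the induction hypothesis already yields $\gamma\ge 0$ for all spheres in its scope makes precise a step the paper leaves implicit. One minor inaccuracy: in your case (i) the isomorphism $\lk_w\Delta\cong\lk_w\Delta'$ holds unconditionally once $w$ is nonadjacent to one of $u,v$ (the purported obstruction face cannot arise), and the no-induced-$4$-cycle hypothesis is really used only to guarantee that $\Delta'$ is a flag homology sphere and that Lemma~\ref{eq:gamma-contraction} applies; this does not affect the correctness of the argument.
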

\begin{proof}
Let $E$ be an equator in $\Delta$.
There are exactly 3 cases: either (i) $e$ is disjoint from $E$, or (ii) $e$ is contained in $E$, or (iii) $e$ intersects $E$ in a single vertex, say $u$.

As $e=uv$ is in no induced $C_4$, its contraction results in a smaller flag homology sphere $\Delta'$. In case (i) we get by induction $\gamma(E)\le \gamma(\Delta')$, so done as $\gamma(\Delta')\le \gamma(\Delta)$ by Lemma~\ref{eq:gamma-contraction}.
In case (ii) the contraction of $e$ in $E$ results in an equator $E'$ of $\Delta'$.
Note that $\lk_eE$ is an equator of $\lk_e\Delta$,
so by induction and Lemma~\ref{eq:gamma-contraction} we get
\[\gamma_{\Delta}(t)=\gamma_{\Delta'}(t)+t\gamma_{\lk_e\Delta}(t)
\ge \gamma_{E'}(t)+t\gamma_{\lk_eE}(t)=\gamma_{E}(t)
.\]
In case (iii),
as
$E$ decomposes $\Delta$ into the union of two homology balls, $\Delta=B_1\cup_E B_2$, and the complexes $\Delta_i=B_i\cup(E *v_i)$ and $\Delta$ satisfy equation (\ref{eq:gammaD1D2decomposition}).
If $f_0(\Delta)>f_0(\Delta_i)$ for $i=1,2$ then by induction $\gamma(E)\le \gamma(\Delta_i)$
for $i=1,2$ and combined with (\ref{eq:gammaD1D2decomposition}) we are done.

Else, w.l.o.g. $e$ is contained in $B_1$.

\textbf{Case} $f_0(\Delta)=f_0(\Delta_1)$: $e$ is in no induced $C_4$ in $\Delta_1$, hence $\lk_v\Delta\cap E\subseteq \st_uE$, so contracting $e$ results in $\Delta'$ where $E$ remains an equator (namely, remains induced).
By induction and Lemma~\ref{eq:gamma-contraction} conclude
$\gamma(E)\le \gamma(\Delta')\le \gamma(\Delta)$.

\textbf{Case} $f_0(\Delta)=f_0(\Delta_2)$: then $E=\lk_v\Delta$.

As $uv$ is in no induced $C_4$, the boundary of the homology ball $B=\st_v\Delta\cup \st_u\Delta$ is an induced subcomplex of $\Delta$, denote it by $E''$; so $E''$ is an equator of $\Delta$.
Consider the flag homology sphere $\Delta''=B\cup E''*w$ with $w$ not a vertex of $B$.
Applying (\ref{eq:gammaD1D2decomposition}) to $\Delta$, $\Delta''$ and the third sphere $\Delta'''$ obtained by coning the boundary of the complementary ball to $B$, we get by induction $\gamma(E'')\le \gamma(\Delta''')$; hence $\gamma(\Delta'')\le \gamma(\Delta)$.

\textbf{If} $\Delta\neq \Delta''$ then
by induction $\gamma(E)\le\gamma(\Delta'')$
and we are done.

\textbf{Else}, $\Delta=\Delta''$.
Note that in this case $\Delta$ is the union of the stars of $u,v$ and $w$; we have
\[f_{\Delta}(t)=(1+t)
(f_{\lk_v\Delta}(t)+f_{\lk_u\Delta}(t))
-(1+t)^2f_{\lk_{e}\Delta}(t) +tf_{\lk_w\Delta}(t)
.\]
Translating into $h$-polynomials we get
\[h_{\Delta}(t)=
h_{\lk_v\Delta}(t)+h_{\lk_u\Delta}(t)
-h_{\lk_{e}\Delta}(t) +th_{\lk_w\Delta}(t).\]
Further, contracting $e$ in $\Delta$ gives the suspension over $\lk_w\Delta$ which by Lemma~\ref{eq:gamma-contraction} gives
\[h_{\Delta}(t)=
 th_{\lk_{e}\Delta}(t) +(1+t)h_{\lk_w\Delta}(t).\]
Equating the RHSs of the last two equations gives in $\gamma$-terms
\[\gamma(\lk_w\Delta)+\gamma(\lk_e\Delta)=
\gamma(\lk_v\Delta)+\gamma(\lk_u\Delta).\]
By Lemma~\ref{eq:gamma-contraction} and induction $\gamma(\lk_w\Delta)\le \gamma(\Delta)$,
 and by induction
$\gamma(\lk_e\Delta)\le \gamma(\lk_u\Delta)$, thus
\[\gamma(\Delta)\ge \gamma(\lk_w\Delta)=\\
\gamma(\lk_v\Delta)+(\gamma(\lk_u\Delta)-\gamma(\lk_e\Delta))
\ge\\
 \gamma(\lk_v\Delta)=\gamma(E)
,\]
completing the proof.
\end{proof}

Recall the family $\mathcal{R}$ from the Introduction, of minimal flag homology spheres.  Proposition~\ref{prop:equator-edge_contract} immediately implies:
\begin{corollary}
If Conjecture~\ref{conj:E} holds for all $\Delta\in \mathcal{R}$  then it holds in general.
\end{corollary}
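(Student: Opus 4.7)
The plan is to proceed by strong induction on the number of vertices $f_0(\Delta)$, combined with the recursive hypothesis in Proposition~\ref{prop:equator-edge_contract}. The family $\mathcal{R}$ is defined precisely so that every flag homology sphere $\Delta \notin \mathcal{R}$ is either octahedral or admits an edge contraction preserving flagness; by Lemma~\ref{lem:Lcontract_flag} the latter condition means exactly that $\Delta$ contains an edge $e$ lying in no induced $4$-cycle. So the strategy is to peel off these two cases of the inductive step using the base-case calculation for octahedral spheres and Proposition~\ref{prop:equator-edge_contract} respectively, reducing everything to the hypothesis about $\mathcal{R}$.

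For the base case I would take $\Delta$ a $(d-1)$-dimensional octahedral sphere. Any induced subcomplex on a vertex subset $W$ is determined by how many antipodal pairs $W$ contains; in particular any equator $E$ is itself the boundary of a lower-dimensional crosspolytope. Both $\gamma(\Delta)$ and $\gamma(E)$ then equal the constant polynomial $1$, so the Equator Conjecture holds trivially in this case. This disposes of the smallest flag homology spheres in each dimension, and will also cover the octahedral alternative in the inductive step.

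For the inductive step, I would fix $\Delta$ with $f_0(\Delta) = n$ and assume Conjecture~\ref{conj:E} for all flag homology spheres of dimension $\le \dim \Delta$ with fewer than $n$ vertices. If $\Delta \in \mathcal{R}$, apply the standing hypothesis of the corollary directly. Otherwise, by the definition of $\mathcal{R}$ recalled above, either $\Delta$ is octahedral (handled by the base case) or there exists an edge $e \in \Delta$ contained in no induced $4$-cycle. In the latter case Proposition~\ref{prop:equator-edge_contract} converts the inductive hypothesis into the assertion of Conjecture~\ref{conj:E} for $\Delta$ and all its equators, closing the induction.

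There is essentially no obstacle here: all the real work has already been done in Proposition~\ref{prop:equator-edge_contract}, which handles the three case analysis of how an equator meets the contractible edge and the delicate situation where contracting $e$ would collapse $\Delta$ onto a suspension. The only point to double-check carefully is that the dichotomy built into the definition of $\mathcal{R}$ matches what Proposition~\ref{prop:equator-edge_contract} consumes, namely that ``admits an edge contraction keeping flagness'' is equivalent to ``possesses an edge in no induced $4$-cycle'', which is exactly Lemma~\ref{lem:Lcontract_flag}.
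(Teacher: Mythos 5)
Your proposal is correct and is exactly the argument the paper intends: the corollary is stated as an immediate consequence of Proposition~\ref{prop:equator-edge_contract}, via induction on the number of vertices, with non-members of $\mathcal{R}$ being either octahedral (where $\gamma(\Delta)=\gamma(E)=1$) or possessing an edge in no induced $4$-cycle so that the proposition applies. Spelling out the base case and the appeal to Lemma~\ref{lem:Lcontract_flag} is a faithful expansion of what the paper leaves implicit.
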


Next we discuss Proposition~\ref{prop:EforAisbett}.
In order to prove it 
we need the following straightforward observation:

\begin{lemma}\label{lem:LinkForAisbett}
The family $\mathcal{S}$ is closed under
(i) suspension and
(ii) links.
\end{lemma}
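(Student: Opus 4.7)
The plan is to prove (i) and (ii) by a joint induction on $f_0(\Delta)$. The base case is $\Delta$ being the boundary of the crosspolytope in some dimension, for which $\susp \Delta$ and every $\lk_v\Delta$ are themselves crosspolytope boundaries, hence in $\mathcal{S}$. For the inductive step, I fix a presentation of $\Delta$ as the stellar subdivision of an edge $e = xy$ in some $\Delta_0 \in \mathcal{S}$ with strictly fewer vertices, and let $w$ denote the new vertex.

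For part (i), the crux is that suspension commutes with edge subdivision: subdividing $e$ in $\susp_{a,b}\Delta_0$ and suspending the subdivision of $\Delta_0$ produce the same flag sphere, since in either case the only new vertex $w$ has link $\susp_{x,y}\susp_{a,b}\lk_e\Delta_0$ while the rest of the complex is just $\susp_{a,b}\Delta_0$ with the edge $xy$ deleted. This identity is a routine face-by-face check. Applying it, $\susp\Delta$ is a subdivision of $\susp\Delta_0$; since $\susp\Delta_0$ lies in $\mathcal{S}$ by induction and $\mathcal{S}$ is closed under edge subdivision by definition, the conclusion follows.

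For part (ii), I would case on the position of $v$ relative to $\{x,y,w\}$. If $v \notin \{x,y,w\}$, a direct unpacking of the subdivision shows $\lk_v\Delta$ equals either $\lk_v\Delta_0$ (when $\{v,x,y\}\notin\Delta_0$) or the subdivision of $\lk_v\Delta_0$ at the edge $xy$ (otherwise); in both situations $\lk_v\Delta_0 \in \mathcal{S}$ by induction on $\Delta_0$, and closure under edge subdivision finishes the case. If $v \in \{x,y\}$, say $v=x$, then a matching of faces shows that $\lk_x\Delta \cong \lk_x\Delta_0$ via the relabeling $y \mapsto w$, so induction on $\Delta_0$ applies. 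The interesting case is $v = w$: here $\lk_w\Delta = \susp_{x,y}\lk_e\Delta_0 = \susp_{x,y}\lk_y(\lk_x\Delta_0)$, and I would apply the inductive hypothesis twice---first to $\Delta_0$ to place $\lk_x\Delta_0$ in $\mathcal{S}$, then to the strictly smaller complex $\lk_x\Delta_0$ to place $\lk_y(\lk_x\Delta_0)$ in $\mathcal{S}$---and finally invoke part (i) to pass to the suspension.

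The only real obstacle I anticipate is this $v = w$ case, which simultaneously uses iterated links and the suspension closure from part (i). It is also what forces the induction to be on $f_0(\Delta)$ rather than on the ``height'' of $\Delta$ above a crosspolytope in the subdivision order: that height is not obviously inherited by $\lk_x\Delta_0$, whereas the vertex count strictly drops under every reduction invoked.
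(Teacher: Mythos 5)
Your proof is correct and follows essentially the same route as the paper: part (i) via the observation that suspension commutes with stellar subdivision of an edge, and part (ii) via the same four-way case analysis on the position of $v$ relative to $\{x,y,w\}$, with the new-vertex case handled by $\lk_w\Delta=\susp_{x,y}\lk_e\Delta_0$, two applications of the inductive hypothesis for links, and part (i). The only (harmless) difference is bookkeeping: you run a single induction on $f_0$ and use closure of $\mathcal{S}$ under edge subdivision where the paper invokes a separate induction on dimension.
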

\begin{proof}
For (i), note that if $\Delta$ is obtained from a homology sphere $\Delta'$ by stellar subdivision at the edge $e\in \Delta'$, then the suspension $\Sigma_{a,b}\Delta$ is obtained from $\Sigma_{a,b}\Delta'$ by stellar subdivision at the same edge $e$.

For (ii), the assertion clearly holds for octahedral spheres. We argue by induction.
Keeping the notation of the proof of part (i), we distinguish cases according to the vertex $v$ whose link is being considered, for $v\in \Delta\in \mathcal{S}$: cases are (1.)$v\in e$, (2.)$v=v_e$ is the new vertex, (3.)$v\in\lk_e\Delta'$, and (4.)otherwise. See e.g.~\cite[Sec.3]{Aisbett-sd} for details. Specifically,
in case (1.) $\lk_v\Delta\cong\lk_v\Delta'$ and we are done by induction on number of vertices, in case (2.) $\lk_{v_e}\Delta\cong\susp_{a,b}\lk_e\Delta'$ so
we are done using part (i), in case (3.) $\lk_v\Delta$ is obtained from $\lk_v\Delta'$ by a stellar subdivision at the edge $e$ so we are done by induction on dimension, and in case (4.) $\lk_v\Delta=\lk_v\Delta'$ and there is nothing new to prove.
\end{proof}

\begin{proof}[Proof of Proposition~\ref{prop:EforAisbett}]
Let $v\in \Delta\in \mathcal{S}$, and $\Delta$ obtained from $\Delta'$ by a stellar subdivision at edge $e$. Consider the 4 cases in the proof of Lemma~\ref{lem:LinkForAisbett}, whose assertion we also use.

In case (1.), \[\gamma(\lk_v\Delta)=\gamma(\lk_v\Delta')\le \gamma(\Delta')\le \gamma(\Delta),\]
where first inequality is by induction and second one is by Lemma~\ref{eq:gamma-contraction}, where nonnegativity of $\gamma(\lk_e(\Delta'))$ is known by Lemma~\ref{lem:LinkForAisbett} and induction.

In case (2.),
\[\gamma(\lk_v\Delta)=\gamma(\lk_e\Delta')\le
\gamma(\Delta')\le \gamma(\Delta),\]
where for the first inequality we applied induction twice, as for $e=uw$, $\lk_e\Delta'=\lk_u(\lk_v\Delta')$.

In case (3.), \[\gamma(\lk_v\Delta)=\gamma(\lk_v\Delta')+t\gamma(\lk_e(\lk_v\Delta'))\le
\gamma(\Delta')+t\gamma(\lk_e\Delta')=\gamma(\Delta),\]
where we used that for a partition of a face $\sigma=\sigma_1\cup\sigma_2$, links operators satisfy $\lk_{\sigma}\Delta'=\lk_{\sigma_2}(\lk_{\sigma_1}\Delta')=
\lk_{\sigma_1}(\lk_{\sigma_2}\Delta')$.

In case (4.) we are immediately done by induction.
\end{proof}

Next we consider relevance of the Structure conjecture in Problem~\ref{prob:tentative}.

\begin{proposition}~\label{prop:tentative->E}
Problem~\ref{prob:tentative} implies Conjecture~\ref{conj:E}.
\end{proposition}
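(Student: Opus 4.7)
The plan is to prove the Equator Conjecture by induction on $(\dim\Delta, f_0(\Delta))$ in lexicographic order. By Proposition~\ref{prop:link->E}, within each inductive layer Equator and Link are equivalent, so I will derive the Link Conjecture for a chosen pair $(\Delta,v)$ and invoke the equivalence freely on smaller spheres. Given $\Delta$ and a vertex $v$, apply Problem~\ref{prob:tentative} to $\Delta$ and split according to the three structural alternatives.

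Alternatives (0) and (i) are handled directly. If $\Delta=\susp_{a,b}\Delta'$ and $v\in\{a,b\}$, then $\lk_v\Delta=\Delta'$ and $\gamma(\lk_v\Delta)=\gamma(\Delta)$ by Lemma~\ref{lem:gamma-susp}; otherwise $v\in V(\Delta')$, $\lk_v\Delta=\susp_{a,b}(\lk_v\Delta')$, and the dimension-reduction part of the induction combined with Lemma~\ref{lem:gamma-susp} gives $\gamma(\lk_v\Delta)=\gamma(\lk_v\Delta')\le\gamma(\Delta')=\gamma(\Delta)$. For alternative (i), the inductive hypothesis covers exactly the spheres Proposition~\ref{prop:equator-edge_contract} needs, and that proposition then yields Equator for $\Delta$, and in particular the Link inequality for $v$.

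The substance lies in alternative (ii). Given $v$, select an equator $E'\subseteq\Delta$ with $v\notin E'$ that is not a vertex link, and decompose $\Delta=B_1\cup_{E'}B_2$; without loss of generality $v$ lies in the interior of $B_1$. Since $E'$ is not a vertex link, each $B_i$ has at least two interior vertices, so the auxiliary spheres $\Delta_i:=B_i\cup(E'*v_i)$ satisfy $\dim\Delta_i=\dim\Delta$ and $f_0(\Delta_i)\le f_0(\Delta)-1$. The inductive hypothesis (same dimension, strictly fewer vertices) therefore supplies both $\gamma(E')\le\gamma(\Delta_i)$ for $i=1,2$ (Equator applied to $\Delta_i$) and $\gamma(\lk_v\Delta_1)\le\gamma(\Delta_1)$ (Link applied to $\Delta_1$). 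Because every face of $\Delta$ containing $v$ lies in $B_1\subseteq\Delta_1$, we have $\lk_v\Delta=\lk_v\Delta_1$. Combining these with the decomposition identity (\ref{eq:gammaD1D2decomposition}),
\[\gamma(\Delta)=\gamma(\Delta_1)+\gamma(\Delta_2)-\gamma(E')\ge\gamma(\Delta_1)\ge\gamma(\lk_v\Delta),\]
closing the induction.

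The delicate point in the deduction is that insisting $E'$ be a non-link equator is exactly what secures the strict drop $f_0(\Delta_i)<f_0(\Delta)$ needed to invoke the inductive hypothesis on $\Delta_i$; without this, alternative (ii) would not suffice. Conceptually, however, the deduction is essentially bookkeeping once the structural input is granted. The real difficulty is Problem~\ref{prob:tentative} itself: in alternative (ii) one must exhibit, for every vertex of a non-suspension flag sphere in which every edge sits in an induced $4$-cycle, an induced equator avoiding that vertex and distinct from every vertex link, and the verification of this dichotomy---particularly in dimension $\ge 3$---is where any complete proof along these lines would have to do its real work.
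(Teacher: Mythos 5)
Your proposal is correct and follows essentially the same route as the paper: reduce to the Link inequality via Proposition~\ref{prop:link->E}, handle alternative (0) with Lemma~\ref{lem:gamma-susp} and induction, alternative (i) with Proposition~\ref{prop:equator-edge_contract}, and alternative (ii) by coning the two balls cut out by a non-link equator avoiding $v$, noting $f_0(\Delta_i)<f_0(\Delta)$ and $\lk_v\Delta=\lk_v\Delta_1$, and chaining the inequalities through the identity~(\ref{eq:gammaD1D2decomposition}). The only cosmetic difference is your lexicographic phrasing of the induction on $(\dim\Delta,f_0(\Delta))$, which covers the same cases as the paper's hypothesis.
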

\begin{proof}
  Let $\Delta$ be a flag homology sphere.
  By Proposition~\ref{prop:link->E}, it is enough to show that
  for every vertex $w$ of $\Delta$ we have
$\gamma(\lk_w(\Delta)) \leq \gamma (\Delta)$.
    We may assume that the assertion of Conjecture~\ref{conj:E} holds for all
  flag homology spheres $\Delta'$ such that $f_0(\Delta')<f_0(\Delta)$ and $\dim(\Delta')\leq \dim(\Delta)$, and that
  one of the outcomes listed in Problem~\ref{prob:tentative} holds for
  $\Delta$. Thus either

(0) $\Delta$ is a suspension, or

(i) there exists an edge in $\Delta$ which belongs to no induced $4$-cycle, or

(ii) for every vertex $v\in \Delta$ there exists an equator $E$ in $\Delta$ which is not a vertex link and which does not contain $v$.

  Let $w$ be a vertex of $\Delta$.
  Assume first that $\Delta=\Sigma_{a,b}\Delta'$.
  If $w \in \{a,b\}$, then
    the result follows immediately from the first statement of
    Lemma~\ref{lem:gamma-basicConstructions}.  Thus $w \in \Delta'$, and
    $\lk_w(\Delta)=\Sigma_{a,b}\lk_w(\Delta')$. 
    Inductively
  we have that $\gamma(\lk_w(\Delta')) \leq \gamma (\Delta')$. But now, again by the first statement
  of Lemma~\ref{lem:gamma-basicConstructions}, we deduce:
  $$\gamma(lk_w(\Delta))=\gamma(\lk_w(\Delta')) \leq \gamma(\Delta')=\gamma(\Delta)$$
  and thus the assertion of Conjecture~\ref{conj:E} holds for
  $\Delta$.

  If there exists an edge in $\Delta$ which belongs to no induced $4$-cycle,
  then the assertion of Conjecture~\ref{conj:E}
  for   $\Delta$ follows immediately from
  Proposition~\ref{prop:equator-edge_contract}.

Thus we may assume that
outcome (ii) above holds.
Let $E$ be an equator in $\Delta$ which is not a vertex link and which does not contain $w$ (such $E$ exists since outcome (ii) holds).  Then $E$  decomposes $\Delta$ as the union of two homology balls $B_1$ and $B_2$ with common boundary $E$, such that in each $B_i$ there are at least \emph{two} interior vertices. We may assume that $w$ is an interior vertex of $B_1$, and
therefore $\lk_w(\Delta)$ is contained in $B_1$.
Consider the flag homology spheres $\Delta_i=B_i\cup (E*v_i)$ where the cone vertex $v_i$ of $E$ is not in $B_i$, for $i=1,2$.
Since $E$ is not a vertex link, we deduce that for $i=1,2$
$f_0(\Delta_i) < f_0(\Delta)$. Consequently,
$\gamma(\lk_w(\Delta_1)) \leq \gamma(\Delta_1)$ and
$\gamma(E) \leq \gamma(\Delta_2)$.
 Note that  $\lk_w(\Delta)=\lk_w(\Delta_1)$.
 We now use  \eqref{eq:gammaD1D2decomposition} to deduce the following
 (coefficientwise):
$$ \gamma(\Delta)=\gamma(\Delta_1)+\gamma(\Delta_2) -\gamma(E) \geq \gamma(\Delta_1) \geq \gamma(\lk_w(\Delta_1)) = \gamma(\lk_w(\Delta)),$$
as required.
\end{proof}

The structure conjectured in Problem~\ref{prob:tentative} clearly holds for spheres of dimension $\le 1$.
Further, it holds in dimension $2$ due to:
\begin{lemma}\label{lem:flagWhiteley}
If $\Delta$ is a flag (homology) $2$-sphere, different from the octahedron's boundary, then there exists an edge $e\in \Delta$ such that $e$ is not contained in any induced $4$-cycle.
\end{lemma}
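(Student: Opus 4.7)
I would argue by contrapositive: assuming every edge of~$\Delta$ lies in some induced $4$-cycle, I aim to show that $\Delta$ is the octahedron.  The starting point is that Euler's formula $\sum_v \deg(v) = 6n - 12$ combined with the flag condition (each vertex link is a cycle of length at least~$4$) produces a vertex~$v$ of minimum degree $d \in \{4, 5\}$.  Writing $C = v_1 v_2 \cdots v_d$ for its link and $D = \antist_v \Delta$ for its antistar (a triangulated disk with boundary~$C$), I would first observe that an induced $4$-cycle through an edge $vv_i$ must have the form $v - v_i - p - v_j - v$, where $v_j$ is a non-consecutive neighbor of $v_i$ on~$C$ and~$p$ is an interior vertex of~$D$ adjacent to both $v_i$ and $v_j$; this uses $\lk_{vv_i} \Delta = \{v_{i-1}, v_{i+1}\}$ in a flag $2$-sphere.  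Under the standing assumption, each~$i$ then yields a ``chord'' $v_i - p_i - v_{j(i)}$ in~$D$ joining non-consecutive boundary vertices.

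Next I would use the planarity of the disk~$D$: two chords whose boundary endpoints are interleaved on~$C$ cannot be vertex-disjoint, so they must share an interior vertex.  In the case $d = 4$ this immediately forces all $p_i$ to coincide at a single interior vertex $w$ adjacent to all of $v_1, v_2, v_3, v_4$; flagness then turns the four triangles $w v_i v_{i+1}$ into faces that tile~$D$, so~$w$ is the unique interior vertex of~$D$ and $\Delta$ is the octahedron.  In the case $d = 5$, any two distance-$2$ chords of~$C$ with disjoint endpoints are interleaved, so the ``sharing'' relation on the five chords is connected for every sign pattern $j(i) \in \{i+2, i-2\}$; again all $p_i$ collapse to a single interior vertex $w$ adjacent to every $v_i$, making $\Delta$ the pentagonal bipyramid on seven vertices.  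Finally I would verify directly that in the pentagonal bipyramid the edge $v_1 v_2$ lies in no induced $4$-cycle (the only candidate partners are $p = v_3$ and $q = v_5$, which fail because $v_3 v_5 \notin \Delta$), contradicting the standing assumption and ruling out $d = 5$.

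The main obstacle is the planar non-crossing argument combined with the sign-case check in the $d = 5$ situation: one must confirm that regardless of the choices $j(i) \in \{i \pm 2\}$ at each link vertex, the induced sharing relation among the five chords is connected.  This follows from the combinatorics of distance-$2$ chords in a $5$-cycle (every pair with disjoint endpoints is interleaved, producing at least five chord-pairs in the sharing relation), but requires a brief case enumeration.  Once this structural collapse is established, the tiling argument and the small-case verification in the pentagonal bipyramid close the proof cleanly.
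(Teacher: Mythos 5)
The paper does not actually contain a proof to compare against: it states Lemma~\ref{lem:flagWhiteley} as a flag analog of Whiteley's lemma and explicitly omits "its simple proof." Judged on its own, your argument is correct and complete in outline. The setup is right: a flag $2$-sphere has all vertex links induced cycles of length at least $4$, Euler's formula gives a vertex $v$ of degree $4$ or $5$, and an induced $4$-cycle through $vv_i$ must be $v$--$v_i$--$p$--$v_j$--$v$ with $v_j$ a non-consecutive vertex of the link cycle and $p$ interior to the disk $\antist_v\Delta$ (since $p$ cannot be adjacent to $v$ and $v_j$ must be, with $v_iv_j$ a non-edge of the induced link). The Jordan-curve/crossing argument is sound: two such length-two chords with disjoint, interleaved boundary endpoints must share a vertex, which can only be the interior vertex, forcing $p$'s to coincide. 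For $d=4$ this immediately yields a common neighbor $w$ of $v_1,\dots,v_4$, and flagness plus the fact that a closed surface subcomplex of a $2$-sphere is the whole sphere gives the octahedron. For $d=5$ the one step you leave as "a brief case enumeration" -- that for every choice $j(i)\in\{i\pm 2\}$ the disjointness (hence sharing) relation on the five chords is connected -- is genuinely needed (your parenthetical edge-count alone would not rule out a disconnected graph on five vertices), but it does check out: writing the chosen diagonals as $D_{i}$ or $D_{i-2}$ in the pentagram $5$-cycle of diagonals and running through the sign patterns up to rotation, the relation is connected in every case, so all $p_i$ collapse to one vertex $w$ and $\Delta$ is the pentagonal bipyramid; your verification that $v_1v_2$ lies in no induced $4$-cycle there is correct (alternatively, the bipyramid has degree-$4$ vertices, contradicting that the minimum degree was $5$). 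So the proposal stands as a valid proof of the omitted lemma; the only improvement I would ask for is to actually write out the $d=5$ sign-pattern check rather than assert it.
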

This statement is a flag analog of Whiteley~\cite[Lem.6]{Whiteley-VertexSplitting}; we omit its simple proof.
Thus, for flag $2$-spheres one of the alternatives (0) and (i) in Problem~\ref{prob:tentative} holds.
The point in Theorem~\ref{thm:dim2} below is to show how alternative (ii) in Problem~\ref{prob:tentative} can be found, when a strong condition that implies (0) or (i) fails to hold.



\begin{theorem}~\label{thm:dim2}
For every vertex $v$ in a flag (homology) $2$-sphere $\Delta$, either

(i) some vertex of $\Delta$ is non-adjacent to at most two vertices, or

(ii) there exists an equator $E$ in $\Delta$ which is not a vertex link and does not contain $v$.
\end{theorem}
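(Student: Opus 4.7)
The plan is to work inside the antistar $A := \antist_v(\Delta)$, a triangulated $2$-ball with boundary the induced cycle $L := \lk_v(\Delta)$ and interior vertex set $X := V(\Delta) \setminus (\{v\} \cup L)$, i.e.\ the non-neighbors of $v$; failure of alternative (i) gives $|X| \geq 3$. Any equator $E \not\ni v$ is an induced cycle of length $\geq 4$ contained in $A$, and writing $\Delta = B_1 \cup_E B_2$ with $v \in \inte B_1$, the requirement that $E$ be not a vertex link amounts to $|\inte B_1|, |\inte B_2| \geq 2$.

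First I would verify that $\Delta[X]$ contains an edge. If $X$ were independent, each $x \in X$ would have $N(x) \subseteq L$, hence $\lk_x(\Delta) \subseteq L$, and since a cycle cannot strictly contain another cycle as a subgraph this forces $\lk_x(\Delta) = L$, making every $x \in X$ a twin of $v$. A direct Euler-characteristic count then yields $\chi(\Delta) = 1 + |X|$, contradicting $|X| \geq 2$. So I may pick adjacent $x, y \in X$ and set $B_2 := \st_x(\Delta) \cup \st_y(\Delta)$: a $2$-ball with $\inte B_2 = \{x, y\}$ whose boundary $E$ is obtained by concatenating, at the two vertices of $\lk_{xy}$, the arc of $\lk_x$ avoiding $y$ with the arc of $\lk_y$ avoiding $x$.

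Next I argue that $E$ is an induced cycle. Chords lying within one arc are forbidden because $\lk_x$ and $\lk_y$ are themselves induced; a chord between two $L$-vertices on opposite arcs cannot exist because the $L$-parts of the two arcs are sub-paths of $L$ separated by the vertices of $\lk_{xy} \cap L$ and $L$ itself is an induced cycle in $\Delta$; and a cross-chord $pq$ with $p$ strictly interior to the $\lk_x$-arc and $q$ strictly interior to the $\lk_y$-arc produces an induced $4$-cycle $x p q y$ in $\Delta$ (the would-be chords $xq, py$ fail because $p \notin \lk_y$ and $q \notin \lk_x$). So if $xy$ is chosen to lie in no induced $4$-cycle of $\Delta$ this last case is also excluded; Lemma~\ref{lem:flagWhiteley} guarantees such an edge in $\Delta$, and one has to verify it can be taken inside $X$ (or else fall back on an alternative that enlarges $B_2$). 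It then remains to ensure $|\inte B_1| \geq 2$, which is automatic when either $L \not\subseteq N(x) \cup N(y)$ or some $z \in X \setminus \{x,y\}$ is non-adjacent to both $x$ and $y$.

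The sharpest remaining case is $|X|=3$ with $X = \{x_1, x_2, x_3\}$ a triangle of $\Delta$; there I would set $N_k := N(x_k) \cap L$, observe each $N_k$ is a sub-path of $L$ whose endpoints are the pairwise intersections $m_{ij} := N_i \cap N_j$ (each a single vertex, since a vertex common to all three $N_k$ would create the forbidden $4$-clique $\{x_1,x_2,x_3,l\}$ in the $2$-dimensional flag complex $\Delta$), so $|L| = |N_1|+|N_2|+|N_3|-3 \geq 4$ forces $|N_k| \geq 3$ for some $k$; the main construction applied with $\{i,j\} = \{1,2,3\} \setminus \{k\}$ then leaves the interior of the path $N_k$ inside $L \setminus V(E)$, supplying the extra interior vertex needed in $B_1$ and finishing the proof. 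The main obstacle I anticipate is the chord analysis in the previous paragraph, specifically picking the pair $xy$ inside $X$ so as to avoid induced $4$-cycles of $\Delta$: Lemma~\ref{lem:flagWhiteley} supplies such an edge in $\Delta$ but not necessarily inside $X$, so either a relocation argument must be made, or a fallback construction (absorbing the obstructing $4$-cycle into a larger $B_2$) has to be supplied in the exceptional configurations.
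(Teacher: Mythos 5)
Your strategy is genuinely different from the paper's: you look for an edge $xy$ with \emph{both} ends non-adjacent to $v$ and take $E=\bd(\st_x\Delta\cup\st_y\Delta)$, whereas the paper first disposes of the case of an edge $vu_t$ lying in no induced $C_4$ (using $\bd(\st_v\Delta\cup\st_{u_t}\Delta)$) and otherwise reroutes $\lk_v(\Delta)$ through a single outside vertex $w$, taking the induced cycle on a $w$-interval together with $w$, with the degenerate cases shown to force outcome (i). The problem is that your plan has a genuine gap, which you yourself flag and do not close: your construction produces an induced cycle only when $xy$ lies in no induced $4$-cycle of $\Delta$ (any chord of $E$ must join strict interiors of the two arcs, since $\lk_x\Delta\cap\lk_y\Delta=\lk_{xy}\Delta$ by flagness and an edge between the two gluing vertices would create a $K_4$, and such a cross-chord yields an induced $C_4$ through $xy$, exactly as you say). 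Lemma~\ref{lem:flagWhiteley} only supplies an edge in no induced $C_4$ \emph{somewhere} in $\Delta$; it may be incident to $v$ or join two neighbours of $v$, and it is not a priori excluded that every edge of $\Delta[X]$ lies in an induced $C_4$ even when (i) fails. Neither the ``relocation argument'' nor the ``fallback construction enlarging $B_2$'' is supplied, and this missing step is precisely where the substance of the theorem lies; the paper's $w$-interval device is its way around this difficulty.

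Two further points. The intermediate claim that the $L$-vertices of each arc form sub-paths of $L$ ``separated by $\lk_{xy}\Delta\cap L$'' is unjustified: an interior vertex of the antistar can meet $L$ in a non-contiguous set; this particular case happens to be subsumed by your cross-chord analysis, so it is harmless but should be removed or reargued. More seriously, the condition $|\inte B_1|\geq 2$ is verified only in the case $|X|=3$ with $X$ a triangle; in general, even for an edge $xy\subseteq X$ in no induced $C_4$, you must rule out (or treat) the configuration $V(\Delta)\setminus\{v\}\subseteq \st_x\Delta\cup\st_y\Delta$, in which your $E$ degenerates to $\lk_v(\Delta)$. As written, the proposal is a plausible programme, with correct preliminary reductions (the independence of $X$ argument and the equivalence of ``not a vertex link'' with both interiors having at least two vertices are fine), but not a proof.
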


\begin{proof}

  Let $v$ be a vertex of $\Delta$. Since $\Delta$ is a flag homology
  $2$-sphere,  $\lk_v(\Delta)$ is an induced cycle. Let the vertices of
 $\lk_v(\Delta)$ be $u_1, \ldots, u_i$, where $u_iu_{i+1}$ is an edge of
$\Delta$ for   every $i \in \{1, \ldots, t-1\}$, and $u_1$ is adjacent to $u_t$.
Then  there are no other adjacent pairs among $\{u_1, \ldots, u_t\}$.
If no vertex of $\lk_{u_t}(\Delta) \setminus \st_v(\Delta)$ has a neighbor in
$\lk_v(\Delta) \setminus \st_{u_t}(\Delta)$, then the edge $vu_t$ is in no
induced  $C_4$, and so $E=(\lk_v(\Delta) \cup \lk_{u_t}(\Delta))\setminus\{v,u_t\}$ is an induced cycle, and therefore an equator. If there exists $w$ such that
  $E=\lk_w(\Delta)$, then outcome (i) holds, and otherwise outcome (ii) holds.
  Thus by symmetry we may assume that for every $i \in \{1, \ldots, t\}$
$u_i$  has a neighbor $w_i \not \in \st_v(\Delta)$ and such that
$w_i$ has a neighbor in $\lk_v(\Delta) \setminus \st_{u_i} (\Delta)$.

For
a vertex $w$,
a {\em $w$-interval} is a circular interval
$[u_j, \ldots, u_k]$ of  $\lk_v(\Delta)$ such that $u_j$ is non-adjacent to $u_k$, $w$ is adjacent to $u_j, u_k$, and $w$ has no other neighbor in
this interval.

Now, no $w_i$-interval exists iff $\lk_{w_i}(\Delta)=\lk_{v}(\Delta)$ in which case outcome (i) holds. Thus, we may assume there exists a $w_i$-interval $I_{w_i}$ for all $1\le i\le t$.
Let $C_{w_i}$ be the induced graph in $\Delta$ on the vertex set $I_{w_i}\cup\{w_i\}$. Then $C_{w_i}$ is an induced cycle not containing $v$.
If $C_{w_i}$ is not a vertex link then outcome (ii) holds; thus assume $C_{w_i}=\lk_{s_i}(\Delta)$ for some vertex $s_i$, for all $1\le i\le t$.

This implies a specific structure on $\Delta$, which means both outcomes (i) and (ii) hold, as follows.
By renaming we may assume that, for some $i$, $I_{w_i}=[u_1,\ldots,u_k]$ is a $w_i$-interval.
Then $\lk_{s_i}(\Delta)\cap \lk_v(\Delta)=I_{w_i}$, thus the unique $s_i$-interval is $[u_k,\ldots,u_1]$. Regard now $s_i$ as $w_2$, then $s_2=w_i$, and $\Delta$ has exactly two vertices outside $\st_v(\Delta)$, and outcome (i) holds.
Also, outcome (ii) holds, as $(w_2,u_k,\ldots,u_1)$ is an equator.
\end{proof}

\section{Half-integral matchings}\label{sec:half}
Here we prove Theorems~\ref{thm:h-ineq} and~\ref{thm:matching}.
For background on shelling see e.g.~\cite{Ziegler}.

Let $u,v$ be two vertices of a simplicial $d$-polytope $P$ ($d\ge 2$) such that $uv$ is not an edge of $P$. Consider the line through $u$ and $v$, and perturb it to obtain an oriented line $l$ that crosses each facet hyperplane in a different point; and the line shelling it defines shells the facets containing $v$ first and the facets containing $u$ last.
By the expression for $h_{\partial P}(t)$ in terms of the shelling one has:

\begin{lemma}\label{lem:uv-shelling}
For all nonedges $uv$ as above,
\[h_{\lk_v\partial P}(t)+t h_{\lk_u\partial P}(t)\le h_{\partial P}(t)
.\]
Further, equality holds iff all facets of $P$ contain either $u$ or $v$, namely $\partial P$ is a suspension over the vertices $u$ and $v$.
\end{lemma}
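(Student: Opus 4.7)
The plan is to read the inequality directly off the line shelling, by splitting its facet sequence $F_1,\ldots,F_s$ into three contiguous blocks: the facets containing $v$ (positions $1,\ldots,k$, which come first by hypothesis), the facets containing neither $u$ nor $v$ (positions $k{+}1,\ldots,m$), and the facets containing $u$ (positions $m{+}1,\ldots,s$, which come last). Writing $h_{\partial P}(t)=\sum_{i=1}^s t^{|R(F_i)|}$ with $R(F_i)$ the restriction face, the inequality will follow once I identify the first partial sum with $h_{\lk_v\partial P}(t)$, the last partial sum with $t\,h_{\lk_u\partial P}(t)$, and observe that the middle block contributes a polynomial with nonnegative coefficients.

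The first identification is routine: the initial segment $F_1,\ldots,F_k$ shells $\st_v\partial P=v\ast\lk_v\partial P$, and coning preserves the $h$-polynomial, so this partial sum equals $h_{\lk_v\partial P}(t)$.

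The last identification is the main obstacle, and I plan to handle it in three steps. First, I will check that $u\in R(F_i)$ for every $i>m$: the ridge $F_i\setminus\{u\}$ lies in a unique other facet of $\partial P$, which cannot contain $u$ and so, by the chosen shelling order, appears before $F_i$. Second, I will pass to the reverse line shelling (reversing the orientation of $l$); because $\partial P$ is Eulerian, the restriction face of $F_i$ in the reverse shelling is the complement $F_i\setminus R(F_i)$, and the initial block of the reverse shelling consists of the facets through $u$ and shells $\st_u\partial P$, giving
\[
h_{\lk_u\partial P}(t)=h_{\st_u\partial P}(t)=\sum_{i>m}t^{\,d-|R(F_i)|}.
\]
Third, substituting $t\mapsto 1/t$ and multiplying by $t^d$, then applying the palindromicity $t^{d-1}h_{\lk_u\partial P}(1/t)=h_{\lk_u\partial P}(t)$ (Dehn--Sommerville for the $(d-2)$-sphere $\lk_u\partial P$), I will conclude $\sum_{i>m}t^{|R(F_i)|}=t\,h_{\lk_u\partial P}(t)$.

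Combining the three blocks yields $h_{\partial P}(t)-h_{\lk_v\partial P}(t)-t\,h_{\lk_u\partial P}(t)=\sum_{k<i\le m}t^{|R(F_i)|}$, which is coefficientwise nonnegative. Equality holds iff this residual sum is empty, iff every facet of $\partial P$ contains $u$ or $v$; since $uv\notin\partial P$, these containments are mutually exclusive, so $\partial P=\st_u\partial P\cup\st_v\partial P$ with $\lk_u\partial P=\lk_v\partial P$, which is exactly the statement that $\partial P$ is the suspension over the vertices $u$ and $v$.
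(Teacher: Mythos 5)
Your proof is correct, and it is exactly the argument the paper gestures at: the paper offers no written proof beyond ``by the expression for $h_{\partial P}(t)$ in terms of the shelling,'' and your three-block decomposition of the line shelling, with the final block handled via the reversed line shelling and Dehn--Sommerville for $\lk_u\partial P$, is the standard way to carry that out (your preliminary observation that $u\in R(F_i)$ for $i>m$ is true but not actually needed once you pass to the reversed shelling). One cosmetic point: the complementation rule for restriction faces under reversal rests on each ridge lying in exactly two facets together with the fact that the reversed line shelling is again a shelling, rather than on $\partial P$ being Eulerian per se.
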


\begin{proof}[Proof of Theorem~\ref{thm:h-ineq}]
By Theorem~\ref{thm:matching}, proved below, the vertex set of $P$ admits a partition into a matching and odd cycles in the complement of the $1$-skeleton of $P$.
Orient the edges in the odd cycles cyclically and consider each edge of the matching as a cyclically oriented $2$-cycle.

Summing the inequality of Lemma~\ref{lem:uv-shelling} over all oriented edges given above, gives (\ref{eq:h-ineq}).

For the equality case, again by Lemma~\ref{lem:uv-shelling}, it happens iff $\Delta$ is a suspension over \emph{each} nonedge. In particular the nonedges give a perfect matching, so $\Delta$ has the same graph as the $d$-crosspolytope, and by flagness we are done.
\end{proof}

Before we prove Theorem~\ref{thm:matching} we need the following lemma.
\begin{lemma} \label{lem:disjointfacets}
  Let $\Delta$ be a flag homology sphere and let $F$ be a facet of
  $\Delta$. Then there exists a facet $F'$ of $\Delta$ that is disjoint from
  $F$.
\end{lemma}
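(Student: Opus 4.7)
The plan is to argue by contradiction via the Nerve Theorem for simplicial complexes. Suppose that no facet of $\Delta$ is disjoint from $F = \{v_1, \ldots, v_d\}$; then every facet of $\Delta$ meets $F$, and since each face is contained in some facet, this yields a covering by subcomplexes
\[\Delta \;=\; \bigcup_{i=1}^{d} \st_{v_i}(\Delta).\]

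The key step is to verify this is a good cover. Flagness of $\Delta$, together with the fact that $F$ is a clique (so every $V_S := \{v_i : i \in S\} \subseteq F$ is itself a face), implies that for every nonempty $S \subseteq [d]$,
\[\bigcap_{i \in S} \st_{v_i}(\Delta) \;=\; \st_{V_S}(\Delta).\]
The inclusion $\supseteq$ is immediate. For $\subseteq$, if $\tau \cup \{v_i\} \in \Delta$ for every $i \in S$, then every pair of vertices in $\tau \cup V_S$ is an edge of $\Delta$ (pairs inside $\tau$, pairs inside $V_S \subseteq F$, and mixed pairs by the hypothesis $\tau \cup \{v_i\} \in \Delta$), so by flagness $\tau \cup V_S \in \Delta$. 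Moreover $\st_{V_S}(\Delta)$ is the join of the simplex on $V_S$ with $\lk_{V_S}(\Delta)$, hence contractible and nonempty (it contains $V_S$).

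Since all nonempty intersections of members of the cover are contractible, the nerve of the cover has $[d]$ as its vertex set and every $S \subseteq [d]$ as a face, so it is the full simplex on $[d]$, which is itself contractible. The simplicial Nerve Theorem (in Bj\"orner's version) then gives that $\Delta$ is homotopy equivalent to its nerve, and in particular has trivial reduced homology. This contradicts the hypothesis that $\Delta$ is a homology $(d-1)$-sphere, since $\widetilde{H}_{d-1}(\Delta;\mathbb{F}) \neq 0$ whenever $d \geq 1$, completing the proof.

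The main obstacle is somewhat cosmetic: identifying the intersections cleanly as stars of sub-cliques via flagness, and invoking the correct simplicial form of the Nerve Theorem. One can sidestep any reference to the Nerve Theorem entirely by running an inductive Mayer--Vietoris argument on the number of sets in the cover, which produces the same contradiction since all iterated intersections are acyclic.
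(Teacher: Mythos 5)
Your proof is correct, but it takes a genuinely different route from the paper's. You argue by contradiction: if every facet met $F$, then the stars $\st_{v_i}\Delta$, $v_i\in F$, would cover $\Delta$, and flagness identifies each intersection $\bigcap_{i\in S}\st_{v_i}\Delta$ with the star $\st_{V_S}\Delta$ of the sub-clique $V_S$, a nonempty cone; Bj\"orner's simplicial nerve theorem (or, as you note, a Mayer--Vietoris induction over the cover) then makes $\Delta$ acyclic, contradicting $\widetilde{H}_{\dim\Delta}(\Delta;\mathbb{F})\cong\mathbb{F}$. The paper instead runs a short induction on dimension: pick $v\in F$, find inductively a facet $F_2$ of $\lk_v\Delta$ disjoint from $F\setminus\{v\}$, and take $F'=F_2\cup\{w\}$ to be the second facet of $\Delta$ containing the ridge $F_2$; flagness is what rules out $w\in F$ (otherwise $F_2\cup\{v,w\}$ would be a face, contradicting that $F_2\cup\{v\}$ is a facet). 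The paper's argument is entirely combinatorial and uses only flagness plus the fact that in a homology sphere every ridge lies in exactly two facets; yours invokes heavier topological machinery but is in a sense more general, since it never uses links or the pseudomanifold property --- it shows that in any flag complex with some nonvanishing reduced homology, no clique can meet every maximal face, which is in the spirit of the Meshulam domination result already cited in the paper. Both approaches hinge on flagness (the statement fails for the boundary of a simplex); if you keep the nerve-theorem version, do cite the subcomplex-cover form of the theorem explicitly, and note that the degenerate case $\dim\Delta=-1$ is trivial.
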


\begin{proof}
  The proof is by induction on the dimension of $\Delta$. Let $v \in F$. Then
  $F_1=F \setminus \{v\}$ is a facet of $\lk_v(\Delta)$.
    Inductively, there exists a facet $F_2$ in
    $\lk_v(\Delta)$ such that $F_2$ is disjoint from $F_1$. Since $\Delta$ is a
    flag homology sphere, each $F_i$
  is contained in two facets of $\Delta$, and therefore there exists a vertex
  $w \neq v$ of $\Delta$ such that $F_2 \cup \{w\}$ is a facet of $\Delta$.
  But now $F$ and $F'=F_2 \cup \{w\}$ are two disjoint facets of $\Delta$ as required.
\end{proof}

We will also use Theorem~2.2.4 of \cite{Matchings}:
\begin{lemma} \label{halfTutte}
   A graph $G$ has a  half-integral perfect matching if and only if for every $X \subseteq V(G)$, $G \setminus X$ has at most $|X|$ isolated vertices.
  \end{lemma}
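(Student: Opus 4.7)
The plan is to deduce the lemma from LP duality, relating the fractional matching and fractional vertex cover polytopes of $G$.

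For the easy direction I would just flow-count: if $f$ is a half-integral perfect matching and $X \subseteq V(G)$, then each isolated vertex of $G \setminus X$ puts its full unit of $f$-weight on edges going into $X$, while the total $f$-weight on edges incident to $X$ is $\sum_{x \in X}\sum_{e \ni x} f(e) = |X|$, so the number of such isolated vertices is at most $|X|$.

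For the converse I would invoke the fractional matching polytope $P_M = \{x \ge 0 : \sum_{e \ni v} x_e \le 1 \text{ for all } v\}$ and the fractional vertex cover polytope $P_{\mathrm{VC}} = \{y \ge 0 : y_u + y_v \ge 1 \text{ for all } uv \in E(G)\}$. Any feasible $x \in P_M$ of weight $|V(G)|/2$ must saturate every vertex constraint, so it is a fractional perfect matching; by Balinski's theorem $P_M$ has half-integral extreme points, and hence the existence of a half-integral perfect matching is equivalent to $\max_{P_M}\mathbf{1}^T x = |V(G)|/2$. By LP duality this equals $\min_{P_{\mathrm{VC}}} \mathbf{1}^T y$, and a standard uncrossing argument (using that each cover constraint involves only two variables) shows $P_{\mathrm{VC}}$ also has half-integral extreme points, so I may restrict to $y \in \{0, 1/2, 1\}^{V(G)}$. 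Writing $V(G) = Y_0 \sqcup Y_{1/2} \sqcup Y_1$ for the preimages, the cover constraints force every neighbor of $Y_0$ to lie in $Y_1$; setting $X = Y_1$ gives $Y_0$ contained in the set of isolated vertices of $G \setminus X$, and the cover weight $\tfrac{1}{2}(|X| + |V(G)| - |Y_0|)$ is $\ge |V(G)|/2$ exactly when $|X| \ge |Y_0|$. Optimizing over valid $Y_0$ reduces this to the Tutte-like hypothesis for every $X$.

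The main obstacle is the appeal to half-integrality of the extreme points of the two LPs. A self-contained alternative would be to pass to the bipartite double cover $G'$ of $G$ (with vertex set $V(G) \times \{1,2\}$ and edges $(u,1)(v,2)$ for each $uv \in E(G)$), observe that half-integral perfect matchings of $G$ correspond bijectively to integer perfect matchings $M'$ of $G'$ via $f(uv) = \tfrac{1}{2}(\mathbf{1}_{(u,1)(v,2) \in M'} + \mathbf{1}_{(u,2)(v,1) \in M'})$ (where weight-$1$ edges lift to parallel pairs in $G'$ and odd-cycle supports lift to even cycles), apply Hall's theorem to $G'$, and then translate a Hall-violating subset back into an isolated-vertex deficit in $G$ by taking its independent part.
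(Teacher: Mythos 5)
Your proposal is correct, but note that the paper does not prove this statement at all: it is quoted as Theorem~2.2.4 of the book \cite{Matchings} (Lov\'asz--Plummer), so any proof you give is "extra" relative to the paper. Both of your routes work. The easy direction by weight-counting is fine (distinct isolated vertices of $G\setminus X$ send their unit of weight along distinct edges into $X$, and the total weight on edges meeting $X$ is at most $|X|$). The LP route is sound but heavier than necessary: it leans on two classical half-integrality theorems (Balinski for the fractional matching polytope, Nemhauser--Trotter/uncrossing for the vertex-cover polytope) plus strong duality, whereas the statement itself is of the same depth as those results; still, the counting with $Y_0,Y_{1/2},Y_1$ is correct, since the hypothesis applied to $X=Y_1$ gives $|Y_0|\le|X|$ and hence cover weight $\tfrac12(|V(G)|+|Y_1|-|Y_0|)\ge |V(G)|/2$. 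Your bipartite double cover alternative is essentially the standard textbook proof and is the cleaner, self-contained option; the only step you leave implicit is the last one, and it does go through: if $S$ violates Hall in the double cover, i.e.\ $|N_G(S)|<|S|$, set $I=S\setminus N_G(S)$; then $I$ is independent and, crucially, no vertex of $S$ is adjacent to a vertex of $I$ (such an $i\in I$ would lie in $N_G(S)$), so $N_G(I)\subseteq N_G(S)\setminus S$ and $|N_G(I)|\le |N_G(S)|-|S\cap N_G(S)|<|S|-|S\cap N_G(S)|=|I|$; taking $X=N_G(I)$ then exhibits more than $|X|$ isolated vertices in $G\setminus X$, which is exactly the needed translation. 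With that one observation spelled out, your argument is complete.
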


\begin{proof}[Proof of Theorem~\ref{thm:matching}]

  Let $\Delta$ be a flag homology sphere, and let $G$ be the complement of the
  $1$-skeleton of $\Delta$.
  First we prove the $G$ has a half-integral perfect matching.
We need  to show that $G$ satisfies the assumption of Lemma~\ref{halfTutte}.
Let $X \subseteq V(G)$ and let $Y$ be the set of isolated vertices of $G\setminus X$. Then  $Y$ is a clique of the $1$-skeleton of $\Delta$,  and every vertex of $Y$ is adjacent to every vertex of $\Delta \setminus (X \cup Y)$. It follows that $Y$ is contained in a facet $F$ of $\Delta$. By Lemma~\ref{lem:disjointfacets} there is a
facet $F'$ of $\Delta$ disjoint from $F$. Since every vertex of $Y$ is adjacent to every vertex of $F' \setminus X$, we deduce that
$|F' \setminus X| + |Y|  \leq |F'|$. Consequently, $|F' \cap X| \geq |Y|$,
and so $|Y| \leq |X|$ as required.

The second assertion of  Theorem~\ref{thm:matching} now follows immdiatly
by Proposition~2.2.2 of \cite{Matchings}.
\end{proof}

\section{Balanced polytopes}\label{sec:Balanced}
In fact, (\ref{eq:h-ineq}) holds also for (completely) balanced simplicial polytopes, for a very similar reason as in the flag case, as we show in this section.
\begin{observation}\label{obs:balanced-NonEdge}
Let $\Delta$ be the boundary complex of a balanced $d$-polytope, and $v$ a vertex in $\Delta$. Then there exists another vertex $v\neq u\in \Delta$ such that $uv$ is not an edge in $\Delta$.
\end{observation}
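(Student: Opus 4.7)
My plan is to argue by contradiction. Suppose every other vertex $u$ of $\Delta$ is adjacent to $v$ in the $1$-skeleton of $\Delta$. I want to derive a contradiction from balancedness together with the fact that $\Delta$ is a $(d-1)$-sphere.

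First, I would invoke the balanced coloring: by hypothesis the $1$-skeleton of $\Delta$ is properly $d$-colored in such a way that each facet uses each of the $d$ colors exactly once. Let $c$ denote the color of $v$. Since every other vertex of $\Delta$ is, by assumption, adjacent to $v$, and the coloring is proper, every other vertex of $\Delta$ has color different from $c$. Hence $v$ is the \emph{unique} vertex of color $c$ in $\Delta$.

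Next, since $\Delta$ is pure of dimension $d-1$ and each of its facets must contain exactly one vertex of color $c$, every facet of $\Delta$ must contain $v$. This means $\Delta = v \ast \lk_v(\Delta)$ is a cone with apex $v$. But a cone is contractible and in particular has trivial reduced top homology, contradicting the fact that $\Delta$, being the boundary of a $d$-polytope, is a $(d-1)$-sphere and has $\widetilde{H}_{d-1}(\Delta) \neq 0$. Therefore some $u \neq v$ is nonadjacent to $v$, as claimed.

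I do not expect a serious obstacle here: the only subtlety is making sure to cite precisely what ``balanced'' means for a $(d-1)$-dimensional complex (a proper $d$-coloring of the vertices so that every facet receives all $d$ colors), and to note that the sphere condition on $\Delta$ rules out being a cone. Both are immediate from the definitions recalled in Section~\ref{sec:Prelim}.
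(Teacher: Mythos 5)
Your proof is correct and is essentially the paper's argument in contrapositive form: the paper simply picks a vertex $u$ of the same color as $v$ (non-adjacent by properness of the coloring), noting that such a $u$ must exist since otherwise every facet would contain $v$ and $\Delta$ would be a cone, contradicting that it is a sphere. Same key facts, same one-line idea.
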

(Just take $u$ of same color as $v$; it exists else $\Delta$ would be a cone over $v$, a contradiction.)
Using line shellings, starting with all facets containing $v$ and ending with all facets containing its non-neighbor $u$ as above, (\ref{eq:h-ineq}) follows from showing that the graph $G$ complementary to the graph of $\Delta$ admits a half-integral perfect matching; equivalently,
by showing that for any subset $X$ of the vertex set $\Delta_0$ of $\Delta$, there are at most $|X|$ isolated vertices in the induced graph $G[\Delta_0\setminus X]$.

Indeed, let $Y=\{y_1,\ldots,y_t\}$ be a maximal set of isolated vertices in $G[\Delta_0\setminus X]$. Then all vertices in $\Delta_0\setminus X$ are in the intersection of the closed stars $\st_{y_i}(\Delta)$. In particular, the induced graph on $Y$ in $\Delta$ is complete so they all have distinct colors. By  Observation~\ref{obs:balanced-NonEdge} there exist distinct $x_1,\ldots,x_t$ with $x_i$ of same color as $y_i$, $x_i\neq y_i$, and so $\{x_1,\ldots,x_t\}\subseteq X$, showing $|Y|\le |X|$.

Thus, Cor.~\ref{cor:h-bound} holds also when replacing \emph{flag} by \emph{balanced}. $\square$

\par
\textbf{Acknowledgements.}
We thank Hailun Zheng for helpful comments on an earlier version of this paper.
\bibliographystyle{plain}
\bibliography{gbiblio}

\begin{thebibliography}{10}

\bibitem{Adamaszek-Hladky:flagUBC}
Micha\l\ Adamaszek and Jan Hladk\'{y}.
\newblock Upper bound theorem for odd-dimensional flag triangulations of
  manifolds.
\newblock {\em Mathematika}, 62(3):909--928, 2016.

\bibitem{Adiprasito-g}
Karim~A. Adiprasito.
\newblock Combinatorial lefschetz theorems beyond positivity.
\newblock {\em math arXiv:1812.10454, v1,v2}, Dec.2018, Jan.2019.

\bibitem{Aisbett-sd}
Natalie Aisbett.
\newblock gamma-vectors of edge subdivisions of the boundary of the cross
  polytope.
\newblock {\em arXiv:1209.1789}, 2012.

\bibitem{Athanasiadis:FlagSd}
Christos~A. Athanasiadis.
\newblock Flag subdivisions and {$\gamma$}-vectors.
\newblock {\em Pacific J. Math.}, 259(2):257--278, 2012.

\bibitem{Charney-Davis}
Ruth Charney and Michael Davis.
\newblock The {E}uler characteristic of a nonpositively curved, piecewise
  {E}uclidean manifold.
\newblock {\em Pacific J. Math.}, 171(1):117--137, 1995.

\bibitem{Gal}
{\'S}wiatos{\l}aw~R. Gal.
\newblock Real root conjecture fails for five- and higher-dimensional spheres.
\newblock {\em Discrete Comput. Geom.}, 34(2):269--284, 2005.

\bibitem{Grunbaum:ConvexPolytopes-03}
Branko Gr{\"u}nbaum.
\newblock {\em Convex polytopes}, volume 221 of {\em Graduate Texts in
  Mathematics}.
\newblock Springer-Verlag, New York, second edition, 2003.
\newblock Prepared and with a preface by Volker Kaibel, Victor Klee and
  G\"unter M.\ Ziegler.

\bibitem{Klee-Novik:f-survey}
Steven Klee and Isabella Novik.
\newblock Face enumeration on simplicial complexes.
\newblock In {\em Recent trends in combinatorics}, volume 159 of {\em IMA Vol.
  Math. Appl.}, pages 653--686. Springer, [Cham], 2016.

\bibitem{Labbe-Nevo}
Jean-Philippe Labb\'{e} and Eran Nevo.
\newblock Bounds for entries of {$\gamma$}-vectors of flag homology spheres.
\newblock {\em SIAM J. Discrete Math.}, 31(3):2064--2078, 2017.

\bibitem{Lutz-Nevo}
Frank~H. Lutz and Eran Nevo.
\newblock Stellar theory for flag complexes.
\newblock {\em Math. Scand.}, 118(1):70--82, 2016.

\bibitem{McMullen-g-conj}
P.~McMullen.
\newblock The numbers of faces of simplicial polytopes.
\newblock {\em Israel J. Math.}, 9:559--570, 1971.

\bibitem{Meshulam-domination}
Roy Meshulam.
\newblock Domination numbers and homology.
\newblock {\em J. Combin. Theory Ser. A}, 102(2):321--330, 2003.

\bibitem{Nevo-Petersen}
Eran Nevo and T.~Kyle Petersen.
\newblock On {$\gamma$}-vectors satisfying the {K}ruskal-{K}atona inequalities.
\newblock {\em Discrete Comput. Geom.}, 45(3):503--521, 2011.

\bibitem{Matchings}
Edward~R. Scheinerman and Daniel~H. Ullman.
\newblock {\em Fractional graph theory}.
\newblock Dover Publications, Inc., Mineola, NY, 2011.
\newblock A rational approach to the theory of graphs, With a foreword by
  Claude Berge, Reprint of the 1997 original.

\bibitem{Stanley:gThm}
Richard~P. Stanley.
\newblock The number of faces of a simplicial convex polytope.
\newblock {\em Adv. in Math.}, 35(3):236--238, 1980.

\bibitem{Stanley:CombinatoricsCommutativeAlgebra-96}
Richard~P. Stanley.
\newblock {\em Combinatorics and Commutative Algebra}.
\newblock Birkh\"auser, 2nd edition, 1996.

\bibitem{Swartz:k-CM}
Ed~Swartz.
\newblock {$g$}-elements, finite buildings and higher {C}ohen-{M}acaulay
  connectivity.
\newblock {\em J. Combin. Theory Ser. A}, 113(7):1305--1320, 2006.

\bibitem{Volodin:FFK}
V.~D. Volodin.
\newblock Geometric realization of {$\gamma$}-vectors of 2-truncated cubes.
\newblock {\em Uspekhi Mat. Nauk}, 67(3(405)):181--182, 2012.

\bibitem{Whiteley-VertexSplitting}
Walter Whiteley.
\newblock Vertex splitting in isostatic frameworks.
\newblock {\em Struc. Top.}, 16:23--30, 1989.

\bibitem{Zheng-flag_UBT5}
Hailun Zheng.
\newblock The flag upper bound theorem for 3- and 5-manifolds.
\newblock {\em Israel J. Math.}, 222(1):401--419, 2017.

\bibitem{Zheng:flag-survey}
Hailun Zheng.
\newblock Face enumeration on flag complexes and flag spheres.
\newblock {\em arXiv:1809.06835}, 2018.

\bibitem{Ziegler}
G{\"u}nter~M. Ziegler.
\newblock {\em Lectures on polytopes}, volume 152 of {\em Graduate Texts in
  Mathematics}.
\newblock Springer-Verlag, New York, 1995.

\end{thebibliography}
\end{document}